\documentclass[12pt,a4paper]{article}
\usepackage{graphicx} %
\usepackage{amsmath}
\usepackage{amssymb}
\usepackage{mathtools}
\usepackage{hyperref}
\usepackage{enumitem}
\usepackage{amsthm}
\usepackage[dvipsnames]{xcolor}
\hypersetup{
  colorlinks=true,
  citecolor=blue,
  linkcolor=purple!80
}

\usepackage{float}
\usepackage{bm}
\usepackage{natbib}
\usepackage[nameinlink,noabbrev,capitalize]{cleveref}

\newcommand\cJ{{\cal J}}
\newcommand\cC{{\cal C}}
\newcommand\cR{{\cal R}}
\newcommand\cA{{\cal A}}
\newcommand\cM{{\cal M}}

\newcommand\R{{\mathbb R}}
\newcommand\N{{\mathbb N}}

\def\lM{{\cal M}}
\def\lK{{\cal K}}
\def\div{\mathrm{div}}
\def\mx{{\bf x}}

\def\vu{{\mathsf u}}
\def\vp{{\mathsf p}}
\def\bA{{\mathbf{A}}}
\def\bM{{\mathbf{M}}}
\def\Id{{\mathbf{I}}}
\def\Hmo#1{{{\rm H}^{-1}(#1)}}
\def\Hzo#1{{{\rm H}^{1}_0(#1)}}
\def\Mab{\lM(\alpha,\beta;\Omega)}
\def\mxi{{\bm \xi}}
\def\Linf#1{{{\rm L}^\infty(#1)}}
\def\Lone#1{{{\rm L}^1(#1)}}
\def\Lp#1#2{{{\rm L}^{#1}(#2)}}
\def\Sym{{\rm Sym}_d}
\def\us{{\vu^\ast}}
\def\ps{{\vp^\ast}}
\def\As{{\bA^{\!\ast}}}
\def\Ms{{\bM^{\ast}}}
\def\ths{{\theta^\ast}}
\def\dA{{\delta\! \bA}}
\def\dth{{\delta \theta}}
\def\pL#1#2{{{\rm L}^{#1}(#2)}}
\def\varg{{\mathrm{VaR}_\gamma}}
\def\cvarg{{\mathrm{CVaR}_\gamma}}

\newcommand{\weakstar}{\stackrel{\ast}\rightharpoonup}
\newcommand{\weakH}{\stackrel{H}\rightharpoonup}

\usepackage{algorithm}
\usepackage{algorithmic}

\newtheorem{theorem}{Theorem}[section]
\makeatletter
\let\c@algorithm\c@theorem
\makeatother

\newtheorem{lemma}[theorem]{Lemma}

\newtheorem{remark}[theorem]{Remark}
\newtheorem{definition}[theorem]{Definition}

\newtheorem{example}[theorem]{Example}

\newtheorem{assumption}[theorem]{Assumption}
\crefname{assumption}{Assumption}{Assumptions}

\title{Optimal design with uncertainties: a risk-averse approach}
\author{Amal Alphonse\footnote{Weierstrass Institute, Berlin, Germany,  
\texttt{alphonse@wias-berlin.de}}\and 
Petar Kun\v{s}tek\footnote{Department of Mathematics, Faculty of Science,  University of Zagreb, Croatia, 
\texttt{petar@math.hr}} \and 
Marko Vrdoljak\footnote{Department of Mathematics, Faculty of Science,  University of Zagreb, Croatia, 
\texttt{marko@math.hr}}}

\begin{document}

\maketitle

\begin{abstract}
We study a class of stochastic optimal design problems for elliptic partial differential equations in divergence form, where the coefficients represent mixtures 
of two conducting materials. The objective is to minimize a generalized risk measure of the system response, incorporating uncertainty 
in the loading through probability distributions. We establish existence of relaxed optimal designs via homogenization theory and derive first-order stationarity conditions satisfied by the optima. 
 Based on these %
 necessary %
 conditions, we develop an optimality criteria algorithm 
for numerical computations. The stochastic component is treated using a truncated Karhunen--Loève expansion, allowing evaluation 
of the value-at-risk  (VaR) and conditional value-at-risk (CVaR) contributions arising from the sensitivity analysis and featured in the algorithm. The method 
 is illustrated for an example involving CVaR-based compliance minimization.
\end{abstract}

\section{Introduction}

Optimal design of structures and materials governed by partial differential equations has been extensively studied in the 
deterministic setting, where uncertainties in loading, boundary conditions, and material properties are neglected \cite{MT, BendKik, A, BS03, SM13}. 
In many applications, 
however, such uncertainties have a significant influence on the system response, and deterministic optimization may lead to designs 
that perform poorly or even fail under random perturbations. This has partly motivated the development of stochastic and risk-averse 
PDE-constrained optimization, in which random inputs are explicitly modeled and optimization criteria are formulated to control 
statistical properties of the resulting random state \cite{CHPRS11, AD15, DHP15, FP18}.

A foundational framework for risk-averse PDE-constrained optimization was established by Kouri and Surowiec in \cite{KS18, Corrigendum} (see also \cite{MR4145243}). 
They studied optimization problems 
constrained by elliptic PDEs with random forcing and incorporated coherent risk measures, most notably the Conditional 
Value-at-Risk (CVaR) to quantify the risk of high-impact, low-probability outcomes. Their analysis included the existence of optimal 
controls %
and the derivation of  first-order optimality conditions. 

The present work is motivated by extending these ideas to optimal design problems arising in composite materials, 
where the control variable is the spatial distribution of two phases with different conductivity properties. 
Incorporating uncertainty into such design problems presents new mathematical challenges. The response of the system to random 
loads becomes a random field, and the objective is typically a functional of the entire distribution, not merely its expectation. 
As highlighted in %
e.g., \cite{SDR14, MR4926315} and references therein, coherent risk measures (such 
as CVaR) provide a robust alternative to expected-value objectives by penalizing rare but severe realizations. However, the presence of risk measures induces nonsmoothness in the objective functional, which in turn requires careful mathematical analysis.

Let us present the problem of interest in this work. The aim is to find the best arrangement of given materials in an open and bounded domain $\Omega\subseteq\R^d$ %
so that the resulting body has optimal properties regarding $m$ different regimes. We consider the simplest case of two isotropic materials possessing conductivities $\alpha$ and $\beta$ with $0<\alpha<\beta$. If we let $\Omega_\alpha\subseteq\Omega$ be the measurable set occupied by the first material, and $\chi \equiv \chi_{\Omega_\alpha}$ its characteristic function, then the conductivity can be written as
\begin{equation}\label{Achar}
\bA=\chi\alpha\Id +(1-\chi)\beta\Id
\end{equation}
where $\Id$ is the identity. Let  $(S, \mathcal{F},\mathbb{P})$  be a probability space, and suppose that $f_i\in \Lp p{S; \Hmo\Omega}$ are given for some $p\in (1,\infty)$, %
and introduce the boundary value problems (on a pointwise a.s. level)
\begin{equation}\label{ses}
\left\{
\begin{array}{l}
-\div(\bA\nabla u_i)=f_i(s,\cdot)\\
u_i\in \Hzo\Omega\,
\\
\end{array}\,
\qquad i=1, \ldots m\,.
\right.
\end{equation}
For example, $u_i$ could model the electrical potential or temperature for the $i\textsuperscript{th}$ state. In general, these equations allow us to take into account $m$ different states and reflect the fact that a device is often required to be robust in multiple scenarios.
Define a quantity of interest $\widehat\cJ\colon \Linf{\Omega; \{0,1\}}  \to L^0(S; \mathbb{R})$ (where by $L^0$ we mean the set of measurable functions) that measures the efficacy of the mixture by 
\[
\widehat\cJ(\chi)(s)=\int_\Omega \Bigl(\chi(\mx)\, g_\alpha(s,\mx,\vu(s,\mx)) + (1-\chi(\mx))\, g_\beta(s,\mx,\vu(s,\mx))\Bigr)\, d\mx\,
\]
where $\vu=(u_1,\ldots,u_m)$ with the $u_i$ determined by \eqref{ses}, $\bA$ is given by \eqref{Achar} and $g_\alpha$ and $g_\beta$ are sufficiently smooth functions satisfying certain growth conditions, see \cref{rem:g_functions} for some examples. Although $g_\alpha$ and $g_\beta$ could depend on $u_i$ in a more complicated fashion (such as depending on $\nabla u_i$) and such cases are also interesting in applications, in this work, we consider only dependence on the pointwise state values.

The functional is considered in this form in \cite{MT} for the single-state problem and in \cite{A} for the multi-state case, as it encompasses the most general class under consideration, while also allowing for a possible dependence of the functional on the random parameter $s$.

In order to treat the fact that the objective function is probabilistic, we can consider a risk-averse %
optimal design problem. For a given risk measure $\cR\colon \Lone S \to \mathbb{R}$, %
the aim is to
find a characteristic function $\chi$ on $\Omega$ that minimizes $\widehat \cJ$ in a risk-averse sense:
$\min_{\substack{\chi \in \Linf{\Omega; \{0,1\}}}} %
\cR[\widehat \cJ(\chi)]$,
possibly supplemented by some regularization term.

As is standard in deterministic optimal design, in order to avoid ill posedness of the problem (see e.g. \cite[\S 3.1.5]{A}), a proper relaxation of the problem is required, leading to the use of composite materials formed as fine mixtures of the original phases. Accordingly, instead of characteristic functions for the first material, its local volume fraction 
$\theta$, taking values in $[0,1]$, is introduced. However, the local volume fraction $\theta$ alone does not fully define the composite; rather a set of admissible composites which we denote by $K(\theta)$ (this contains all the effective materials with proportions $\theta$)  does, see \eqref{eq:defnOfKTheta}. Therefore, the control variable becomes $(\theta,\bA)$  with $\bA\in K(\theta)$ almost everywhere in $\Omega$. Let us denote
\begin{align}
\cA := \Big\{ (\theta, \bA) \in \Linf{\Omega; [0,1]}\times \Linf{\Omega; \Sym}  : \bA(\mx) \in {K}(\theta(\mx)) \text{ for a.e. $\mx \in \Omega$}\Big\}    \label{eq:defn_mathcal_A}
\end{align}
where the notation $\Sym$ means the set of real symmetric $d\times d$ matrices. 
Finally, the problem we study in this paper can be formulated as %
\begin{equation}
\min_{(\theta,\bA) \in \cA}\cR[\cJ(\theta, \bA)] + \varrho(\theta)   \tag{P} \label{eq:min_problem_intro}
\end{equation}
where
\begin{equation}
\cJ(\theta, \bA)(s) =\int_\Omega \Bigl(\theta(\mx)\, g_\alpha(s,\mx,\vu(s,\mx)) + (1-\theta(\mx))\, g_\beta(s,\mx,\vu(s,\mx))\Bigr)\, \mathrm{d}\mx\quad\text{a.s. in $s$},      \label{eq:cJ_intro}
\end{equation}
$\vu$ is uniquely determined by $\bA$  through  \eqref{ses}, $\cR$ is a risk measure and $\varrho$ is a regularizer. %
We will formulate our problem in full rigor in  \cref{sec:existence}. %
Let us give some examples of the objective functional, risk measure and regularizer. 
\begin{example}\label{rem:g_functions}
An example of the objective function for $m=1$ is obtained with the choice $g_\alpha(s,\mx, u) = g_\beta(s,\mx,u) = f(s,\mx)u$, whence 
\[\cJ(\theta, \bA)(s) = \int_\Omega f(s,\mx)u(s,\mx)\;\mathrm{d}\mx,\]
which is the energy dissipation functional. Another example relates to tracking-type objectives typical in optimal control where $g_\alpha(s,\mx,u) = |u(s,\mx)-u_\alpha(\mx)|^2$ and $g_\beta(s,\mx,u) = |u(s,\mx)-u_\beta(\mx)|^2$ where $u_\alpha, u_\beta$ are given (deterministic) desired states; this choice leads to 
\[\cJ(\theta, \bA)(s) = \int_\Omega (\theta(\mx)|u(s,\mx)-u_\alpha(\mx)|^2 + (1-\theta(\mx))|u(s,\mx)-u_\beta(\mx)|^2\;\mathrm{d}\mx.\]
For the multiple state problem with $m>1$, we could consider simply the sum of individual functions associated to each state:
\[\cJ(\theta, \bA)(s) = \sum_{i=1}^m \int_\Omega \Bigl(\theta(\mx)\, g_\alpha^i(s,\mx,u_i(s,\mx)) + (1-\theta(\mx))\, g_\beta^i(s,\mx,u_i(s,\mx))\Bigr)\, \mathrm{d}\mx. \]
These examples were pointed out in \cite[\S 3.1]{A}. 

Regarding the risk measure, we can take $\cR$ to be CVaR as mentioned above (see \eqref{eq:defnCVAR} for a definition) or we could work in the risk-neutral setting where $\cR = \mathbb{E}$ is the expected value, for example.

For the regularizer $\varrho$, one could consider $\varrho(\theta) = \frac12 \lVert \theta \rVert_{L^2(\Omega)}^2$ or the typical choice $\varrho(\theta) = \lambda \int_\Omega \theta\;\mathrm{d}\mx$.
\end{example}

\begin{remark}
A different approach is to pose the probability maximization problem
\begin{equation}
\max_{(\theta, \bA) \in \cA} \mathbb{P}(\cJ(\theta, \bA)(s) \leq c)\label{eq:max_problem}
\end{equation}
for a given acceptable threshold level $c>0$. This models the situation where we seek to find the design that maximizes the probability that our objective is satisfied  up to a small error. This is of course a different model, but it is not too difficult to see that it can be written in the form \eqref{eq:min_problem_intro} with the risk measure $\cR$ chosen to be $\varg$. However, our theory does not apply here because $\varg$ is not convex and convexity is enforced on $\cR$ in \cref{ass:onBothFunctionals}. Nevertheless, probability maximization problems such as \eqref{eq:max_problem} can be tackled directly and stationarity conditions can be derived, at least in the setting where the optimization is unconstrained and some structural assumptions on the objective in terms of the uncertain variable are present (such as convexity or quadraticity, the latter of which is satisfied by our example in \cref{sec:numerics}). Differentiability of the probability function has been the object of study in e.g. \cite{MR3273343, MR4081252} using the spherical radial decomposition, mostly in the finite-dimensional setting and to a lesser degree the reflexive, separable Banach space setting \cite{MR3935077}, which does not fit our non-separable, non-reflexive control space $\cA$. The work \cite{MR4755151} does consider a general Banach space setting, but utilizes the above-mentioned convexity. We believe it is possible to extend these results to our constrained optimization problem in a non-reflexive, non-separable space with a nonconvex dependence on the uncertainty (since the proofs in the above-cited papers either can be adapted or they simply hold verbatim when the objective function is $C^1$, as it is for us), yet this is non-trivial and outside the scope of this paper. Thus we leave this, and other interesting aspects such as pointwise chance constraints, for investigation in future work. 
 
\end{remark}

We conclude this section with a brief overview of the paper. In \cref{sec:existence}, we precisely introduce the set of admissible controls and state the assumptions that ensure the existence of a solution. We also outline the key elements of the homogenization theory employed. \cref{sec:optimality} is devoted to the  derivation of  first-order optimality conditions. %
Finally, \cref{sec:numerics} presents a variant of the optimality criteria algorithm tailored to the numerical solution of the problem. 
By employing the Karhunen--Loève expansion of the random right-hand sides, we obtain an 
explicit finite-dimensional representation of the stochastic component, which enables an efficient 
implementation of the algorithm and is illustrated by a numerical example. We conclude the paper in \cref{sec:conclusion} with some remarks.

\section{\texorpdfstring{$H$}{H}-convergence and existence of solutions}\label{sec:existence}

 Here and below, we embed the Sobolev space $\Hzo\Omega$ with the norm $\lVert u \rVert_{\Hzo\Omega} := \lVert \nabla u \rVert_{\pL{2}{\Omega}}$. 
 
 In connection with the problem involving $\widehat\cJ$ (wherein we note that $\chi$ influences $\bA$ from \eqref{Achar}, which in turn influences the boundary value problems \eqref{ses}), it is well known that no reasonable pair of topologies exists for which the mapping $\chi\mapsto u$ is continuous from the set of measurable characteristic functions on $\Omega$ to H$^1(\Omega)$, where $u$ denotes the unique solution of 
\begin{equation}\label{sdif}
\left\{
\begin{array}{l}
-\div(\bA\nabla u)=f\\
u\in {\rm H}^1_0(\Omega)\,,\\
\end{array}
\right.
\end{equation}
with $\bA$  given by \eqref{Achar}.
On the other hand, homogenization theory \cite{MT1, MT, A}  provides a natural topology on a class of coefficients $\bA$ for which the mapping $\bA\mapsto u$ is continuous for every $f\in\Hmo\Omega$ with $u$ equipped with the  weak H$^1$ topology.

To be more precise, with $\mathrm{M}_d(\R)$ denoting the space of real $d\times d$ square matrices, this topology is introduced on the set $\Mab$, defined \cite{MT1,MT}
\begin{align*}
\Mab := \{ &\bA \in \Linf{\Omega;{\rm M}_d(\R)} : \bA(\mx)\mxi\cdot\mxi \geq\alpha |\mxi  |^2 \text{ and }\\
&\bA(\mx)\mxi\cdot\mxi \geq {\frac1\beta}|\bA(\mx)\mxi|^2 \text{ for a.e. $\mx\in\Omega$} \text{ and for all }\mxi\in\R^d\},   
\end{align*}
where $|\cdot |$ denotes the Euclidean norm on $\R^d$.  The notion of $H$-convergence is important, which we recall now.
\begin{definition}[$H$-convergence]
We say that the matrix sequence $\{\bA_n\}\subseteq \Mab$ \emph{$H$-converges} to $\bA\subseteq \Mab$ with respect to the $H$-topology if for arbitrary $f \in \Hmo\Omega$ the solutions $u_n \in H^1_0(\Omega)$ of
\[-\div(\bA_n\nabla u_n) = f\]
 satisfy $u_n \rightharpoonup u$ in $H^1_0(\Omega)$ and $\bA_n\nabla u_n \rightharpoonup \bA\nabla u$ in $L^2(\Omega;\mathbb{R}^d)$ where  $u \in H^1_0(\Omega)$ solves
 \[-\div(\bA\nabla u) =f.\]
 We write this as $\bA_n\weakH\bA$.
\end{definition}
Crucially, this convergence introduces a topology that is known to be metrizable and compact on $\Mab$ \cite[Theorem 6]{Ta2}. 
Note also that the set of conductivities of the form \eqref{Achar} where $\chi$ is an arbitrary characteristic function belongs to $\Mab$.

If a sequence of characteristic functions $\chi_n$ on $\Omega$ weakly-$\ast$ converges to $\theta\in\Linf{\Omega;[0,1]}$ and  
$\bA_n=\chi_n\alpha\Id +(1-\chi_n)\beta\Id$ $H$-converges to $\bA\in\Mab$, we say that $\bA$ is a \textit{homogenized conductivity} of a two-phase composite material obtained by mixing $\alpha\Id$ and $\beta\Id$ in proportions $\theta$  and $1-\theta$, see \cite[\S 2.1.2]{A}.  The set of all such homogenized conductivities $\bA$ (i.e., $H$-convergent limits $\bA$ of the process described above) is denoted by $\lK_\theta$:
\begin{equation}
    \lK_\theta = \{ \bA \in \Mab : \text{$\bA$ is the $H$-limit of $\bA_n$ as described above}.\}
\end{equation}
This set can be understood as the set of all composite materials associated to $\theta$.

By the local character of $H$-convergence it follows that the set of homogenized conductivities $\lK_\theta$  can be characterized as L$^\infty$ matrix functions such that for almost every $\mx\in\Omega$ we have $\bA(\mx)\in K(\theta(\mx))$. Here, for any $\vartheta\in[0,1]$ the set $K(\vartheta)$ is some closed set of matrices. In fact, in our situation of mixing two isotropic phases, $K(\vartheta)$ is a set of symmetric matrices which can be explicitly characterized in terms of their eigenvalues (see \cite[Proposition 10]{MT} for precise details, see also \cite{LC1}, \cite[Theorem 2.2.13]{A}): 
\begin{align}
\nonumber    K(\vartheta) = \Bigg\{ \mathbf{M} \in \Sym &: \lambda_\vartheta^- \leq \lambda_i \leq \lambda_\vartheta^+ \quad \text{ for every } 1 \leq i \leq d,\\
\nonumber    &\;\;\sum_{i=1}^d \frac{1}{\lambda_i-\alpha} \leq \frac{1}{\lambda_\vartheta^- - \alpha} + \frac{d-1}{\lambda_\vartheta^+-\alpha},\\
    &\;\;\sum_{i=1}^d \frac{1}{\beta - \lambda_i} \leq \frac{1}{\beta - \lambda_\vartheta^-} + \frac{d-1}{\beta - \lambda_\vartheta^+}\Bigg\}\label{eq:defnOfKTheta}
\end{align}
where we used the notation $\lambda_i$ to mean the eigenvalues of the matrix $\mathbf{M}$ and $\lambda_\vartheta^-$ and $\lambda_\vartheta^+$ are the harmonic and arithmetic means of $\alpha$ and $\beta$, i.e., $\lambda_\vartheta^- = (\vartheta\slash \alpha + (1-\vartheta)\slash \beta)^{-1}$ and $\lambda_\vartheta^+ = \vartheta\alpha + (1-\vartheta)\beta$. %

Now it is easy to see that the set $\cA$\footnote{Note that  $\cA$ corresponds to the set $\mathcal{CD}$ defined in \cite[Equation (3.39)]{A}.} defined in \eqref{eq:defn_mathcal_A}
\begin{align*}
\cA := \Big\{ (\theta, \bA) \in \Linf{\Omega; [0,1]}\times \Linf{\Omega; \Sym}  : \bA(\mx) \in {K}(\theta(\mx)) \text{ for a.e. $\mx \in \Omega$}\Big\}    
\end{align*}
is compact with respect to the weak-$\ast$ topology for $\theta$ and the $H$-topology for $\bA$. 
For fixed $\vartheta,$ the set $K(\vartheta)$ is convex \cite[Theorem 2.2.13]{A}, but the set $\cA$ is not \cite[Example 2.1]{V2}. %

We return to the problem \eqref{eq:min_problem_intro}. To begin with, let us consider the well posedness of the constituent state equations \eqref{ses}.
\begin{lemma}
For $\bA \in \Mab$, the problem \eqref{ses} is uniquely solvable and $u_i \in L^p(S; H^1_0(\Omega))$ for $i = 1, \ldots, m$. Furthermore, we have the following a.s. estimate:
\begin{equation}
\|u_i(s,\cdot)\|_{\Hzo\Omega}\leq \alpha^{-1}\| f_i(s,\cdot)\|_{\Hmo\Omega}\,,\;i=1,\ldots, m\,.\label{eq:a_priori_u_i}
\end{equation}
\end{lemma}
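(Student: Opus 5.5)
Fix $\bA\in\Mab$ and $i\in\{1,\ldots,m\}$. The plan has three parts: apply Lax--Milgram pointwise in $s$, derive the a priori bound from coercivity, and then establish measurability of $s\mapsto u_i(s,\cdot)$. The $\Lp p{S;\Hzo\Omega}$ integrability will then follow directly from the bound.

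\textbf{Step 1: pointwise solvability.} Consider the bilinear form
\[
a(u,v)=\int_\Omega \bA\nabla u\cdot\nabla v\,d\mx
\]
on $\Hzo\Omega$. The second defining inequality of $\Mab$, combined with Cauchy--Schwarz, gives $|\bA(\mx)\mxi|\le\beta|\mxi|$. Hence $a$ is bounded with constant $\beta$. The first inequality gives $a(u,u)\ge\alpha\|\nabla u\|^2_{\pL2\Omega}=\alpha\|u\|^2_{\Hzo\Omega}$. Since $f_i\in\Lp p{S;\Hmo\Omega}$, for a.e.\ $s$ we have $f_i(s,\cdot)\in\Hmo\Omega$. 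Lax--Milgram then yields a unique $u_i(s,\cdot)\in\Hzo\Omega$ solving \eqref{ses}.

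\textbf{Step 2: the a priori estimate.} Test the weak formulation with $v=u_i(s,\cdot)$. This gives
\[
\alpha\|u_i(s,\cdot)\|^2_{\Hzo\Omega}\le\langle f_i(s,\cdot),u_i(s,\cdot)\rangle\le\|f_i(s,\cdot)\|_{\Hmo\Omega}\|u_i(s,\cdot)\|_{\Hzo\Omega},
\]
and dividing through yields \eqref{eq:a_priori_u_i} a.s.

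\textbf{Step 3: measurability and integrability.} The main point requiring care is showing that $s\mapsto u_i(s,\cdot)$ is (strongly) measurable, so that it defines an element of $\Lp p{S;\Hzo\Omega}$. For this I will write $u_i(s,\cdot)=T_{\bA}f_i(s,\cdot)$, where $T_{\bA}\colon\Hmo\Omega\to\Hzo\Omega$ is the solution operator. This operator is linear, and by the same estimate as in Step 2 it is bounded with $\|T_{\bA}\|\le\alpha^{-1}$; in particular it is continuous.

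Since $f_i$ is Bochner measurable, it is the a.s.\ limit of simple functions $f_i^n$. The compositions $T_{\bA}f_i^n$ are simple and converge a.s.\ to $T_{\bA}f_i$ by continuity, so $u_i$ is strongly measurable.

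Finally, raising \eqref{eq:a_priori_u_i} to the power $p$ and integrating over $S$ gives
\[
\|u_i\|_{\Lp p{S;\Hzo\Omega}}\le\alpha^{-1}\|f_i\|_{\Lp p{S;\Hmo\Omega}}<\infty.
\]
Uniqueness in $\Lp p{S;\Hzo\Omega}$, i.e.\ up to null sets, follows from the pointwise uniqueness in Step 1.
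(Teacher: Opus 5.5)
Your proof is correct and follows the paper's route. You apply Lax--Milgram pointwise in $s$ to get solvability and the estimate, then write $u_i(s)=T_{\bA}f_i(s)$ with a bounded linear solution operator to obtain strong measurability and membership in $\Lp p{S;\Hzo\Omega}$; the only difference is that you check measurability directly via simple-function approximation, where the paper cites Yosida's corollary for exactly this fact.
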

\begin{proof}
    Well posedness almost surely in $s\in S$, together with the  \emph{a priori} estimate \eqref{eq:a_priori_u_i}, follows directly from the Lax--Milgram lemma. Defining the linear operator $T\colon H^1_0(\Omega) \to H^{-1}(\Omega)$ by $Tu := -\div(\bA \nabla u)$, we can write $u_i(s) = T^{-1}f_i(s)$. By \cite[Section V.5, Corollary 2]{Y95} $u_i$ is strongly measurable and, by the estimate \eqref{eq:a_priori_u_i} above,  belongs to the specified Bochner space.
\end{proof}
Since the right-hand sides $f_1,\ldots,f_m\in \pL p{S;\Hmo\Omega}$ from \eqref{ses} are considered  fixed, to simplify notation we introduce  the mapping \[\text{$G\colon\Mab\to\left[\pL p{S;\Hzo\Omega}\right]^m, \quad G(\bA) = \vu$
determined by \eqref{ses}. }\]
Recall that the (relaxed) function $\cJ\colon \cA \to \Lone S$ from \eqref{eq:cJ_intro} is defined via
\[
\cJ(\theta, \bA)(s) =\int_\Omega \Bigl(\theta(\mx)\, g_\alpha(s,\mx,\vu(s,\mx)) + (1-\theta(\mx))\, g_\beta(s,\mx,\vu(s,\mx))\Bigr)\, \mathrm{d}\mx
\]
such that $\vu = G(\bA)$, i.e.  $\vu$ and $\bA$ are related via \eqref{ses}.
Regarding the functions $g_\alpha$, $g_\beta$, we assume the following.
\begin{assumption}\label{ass:on_gs_first}
The functions 
 $g_\alpha$ and $g_\beta$ are  Carath\'eodory functions (measurable in $(s,\mx)$ on the product space $S\times\Omega$ and continuous in $\vu$) satisfying the  growth conditions:
\begin{equation}\label{growth1}
|g_\gamma(s,\mx,\vu)|\leq \varphi_\gamma(s,\mx) + \psi_\gamma(s,\mx){|\vu|}^{q_1} \qquad{\rm for}\; \gamma=\alpha,\beta,
\end{equation}
with 
$1\leq q_1\leq\min\{p,q_\ast\}$ where
\[ 
q_\ast=\left\{\begin{array}{rl}
+\infty&\,:\; d\leq 2,\\
{\frac{2d}{d-2}}&\,: \;d>2\,,
\end{array}\right.
\]
and $\varphi_\gamma\in\pL1{S\times\Omega}$, $\psi_\gamma\in\Lp{q_2}{S;\pL{q_3}{\Omega}}$ with $q_2=\displaystyle\frac{p}{p-q_1}$, and  
$q_3>\displaystyle\frac{q_*}{q_*-q_1}$. %

\end{assumption}

In addition, we make the following assumption\footnote{Actually, the convexity assumption on $\cR$ is only used from \cref{lem:stationarity_supremum} onwards.} on the risk measure $\cR$ and the regularization term $\varrho$.
\begin{assumption}\label{ass:onBothFunctionals}
Let $\cR\colon \Lone S \to \R$ be lower semicontinuous and convex and let $\varrho \colon \Linf{\Omega; [0,1]} \to \R$ be  weak-$\ast$ lower semicontinuous. 
 \end{assumption}

Under these assumptions, the relaxed problem \eqref{eq:min_problem_intro}, which we recall again
\begin{equation}%
\min_{(\theta, \bA) \in \cA} \cR[\cJ(\theta, \bA)] + \varrho(\theta)\tag{\ref{eq:min_problem_intro}},
\end{equation}
is a well defined problem. Let us now show that it possesses a solution.

\begin{theorem}\label{lemma:exist}
Suppose that  \cref{ass:on_gs_first,ass:onBothFunctionals} are satisfied. Then, the relaxed problem \eqref{eq:min_problem_intro} admits a solution $(\theta^*, \bA^*) \in \cA$.
\end{theorem}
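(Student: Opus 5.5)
We use the direct method of the calculus of variations on $\cA$ with the product topology: weak-$\ast$ convergence for $\theta$ and $H$-convergence for $\bA$.

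\textbf{Minimizing sequence.} First we check that the infimum of \eqref{eq:min_problem_intro} is finite. Since $\cR$ is real-valued on $\Lone S$, and $\varrho$ is weak-$\ast$ lower semicontinuous on the weak-$\ast$ compact set $\Linf{\Omega;[0,1]}$, $\varrho$ is bounded below. We still have to bound $\cR\circ\cJ$ from below; this comes out of the limit argument below anyway. Take a minimizing sequence $(\theta_n,\bA_n)\in\cA$. By the compactness of $\cA$ noted after \eqref{eq:defnOfKTheta}, we may pass to a subsequence with $\theta_n\weakstar\theta^*$ in $\Linf\Omega$ and $\bA_n\weakH\As$, where $(\theta^*,\As)\in\cA$. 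Because the $H$-topology is metrizable, sequences suffice.

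\textbf{Key step: $\cJ(\theta_n,\bA_n)\to\cJ(\theta^*,\As)$ strongly in $\Lone S$.} Set $\vu_n=G(\bA_n)$ and $\us=G(\As)$.
\begin{enumerate}[label=(\roman*)]
\item For a.e. $s$, the definition of $H$-convergence, applied with right-hand side $f_i(s,\cdot)\in\Hmo\Omega$, gives $u_{n,i}(s)\rightharpoonup u^*_i(s)$ in $\Hzo\Omega$.
\item By Rellich, $u_{n,i}(s)\to u^*_i(s)$ strongly in $\pL{r}{\Omega}$ for every $r<q_*$. In particular this holds for $r=q_1 q_3'$, since $q_3>q_*/(q_*-q_1)$ is equivalent to $q_1q_3'<q_*$.
\item Write $\cJ(\theta_n,\bA_n)(s)-\cJ(\theta^*,\As)(s)$ as the sum of two terms:
\[
\int_\Omega \theta_n\bigl(g_\alpha(\vu_n)-g_\alpha(\us)\bigr)+(1-\theta_n)\bigl(g_\beta(\vu_n)-g_\beta(\us)\bigr)\,\mathrm{d}\mx
\quad\text{and}\quad
\int_\Omega (\theta_n-\theta^*)\bigl(g_\alpha(\us)-g_\beta(\us)\bigr)\,\mathrm{d}\mx .
\]
\item The first term tends to $0$. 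By the growth condition \eqref{growth1}, the Nemytskii operator $\vu\mapsto g_\gamma(s,\cdot,\vu)$ is continuous from $\pL{q_1q_3'}{\Omega}$ into $\pL1\Omega$ (Krasnoselskii), using $\varphi_\gamma(s)\in\pL1\Omega$ and $\psi_\gamma(s)\in\pL{q_3}\Omega$ a.s., and $|\theta_n|\le1$.
\item The second term tends to $0$ by weak-$\ast$ convergence, since $g_\gamma(s,\cdot,\us(s))\in\pL1\Omega$.
\item This gives pointwise a.s. convergence in $s$. To upgrade it to $\Lone S$ convergence we use dominated convergence. Combining \eqref{eq:a_priori_u_i} with Sobolev embedding and H\"older,
\[
|\cJ(\theta_n,\bA_n)(s)|\le \sum_\gamma\Bigl(\|\varphi_\gamma(s)\|_{\pL1\Omega}+C\|\psi_\gamma(s)\|_{\pL{q_3}\Omega}\,\|f(s)\|_{\Hmo\Omega}^{q_1}\Bigr),
\]
where $C$ is independent of $n$. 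The right-hand side lies in $\Lone S$ by H\"older in $s$ with exponents $q_2=p/(p-q_1)$ and $p/q_1$.
\end{enumerate}
Measurability in $s$ is inherited from the lemma above together with the Carath\'eodory property.

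\textbf{Conclusion.} Lower semicontinuity of $\cR$ on $\Lone S$ and weak-$\ast$ lower semicontinuity of $\varrho$ give
\[
\cR[\cJ(\theta^*,\As)]+\varrho(\theta^*)\le\liminf_{n\to\infty}\Bigl(\cR[\cJ(\theta_n,\bA_n)]+\varrho(\theta_n)\Bigr)=\inf\,\eqref{eq:min_problem_intro}.
\]
So $(\theta^*,\As)$ is a minimizer, and in particular the infimum is finite.

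The main obstacle is step (iv), the exponent bookkeeping that makes the Nemytskii operators continuous and gives the uniform integrable majorant in $s$. The case $d\le2$, where $q_*=\infty$, needs $q_1q_3'<\infty$, which holds automatically.
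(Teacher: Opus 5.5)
Your proof is correct and follows essentially the paper's route: weak $\Hzo\Omega$ convergence from $H$-convergence, Rellich, Carath\'eodory and growth bounds with dominated convergence for the $g_\gamma$-terms, weak-$\ast$ convergence for the $\theta$-term, then lower semicontinuity and compactness of $\cA$. The only differences are that you argue fiberwise in $s$ via Nemytskii continuity and then dominate in $s$ with the a priori bound, whereas the paper builds a joint majorant on $S\times\Omega$ from $\Lp p{S;\Lp q\Omega}$ convergence, and that you treat $\int_\Omega(\theta_n-\theta^*)\bigl(g_\alpha(\us)-g_\beta(\us)\bigr)\,\mathrm{d}\mx$ as a signed integral, which is the right choice: the paper's terms $\mathrm{II}$ and $\mathrm{IV}$ put the absolute value inside the $\Omega$-integral, and $\int_\Omega|\theta_n-\theta|\,|g_\alpha(\vu)|\,\mathrm{d}\mx$ need not vanish under mere weak-$\ast$ convergence (e.g.\ oscillating characteristic functions $\theta_n\weakstar\frac12$).
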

\begin{proof}
Let us first show that $\cR \circ \cJ \colon \cA \to \R$ is sequentially lower semicontinuous. 
Consider a sequence $(\theta_n, \bA_n) \in \cA$ converging to $(\theta, \bA)$ with respect to the appropriate topologies, so that $\theta_n \weakstar \theta$ in $\Linf\Omega$ and $\bA_n \weakH \bA$. Let us set $\vu^n = G(\bA_n)$ and $\vu = G(\bA)$.  
By $H$-convergence  we have the weak convergence %
\begin{equation}\label{slun}
u_i^n(s,\cdot)\rightharpoonup u_i(s,\cdot) \quad { \rm in}\quad \Hzo\Omega\,,\quad {\rm a.s.}%
\end{equation}
Moreover, by the  Sobolev imbedding theorem  we have $\Hzo\Omega \subset \Lp q\Omega$ compactly for any $q\in[1,q_\ast)$ 
with $q_\ast$  as defined  above. Hence the convergence is actually strong in $\Lp q\Omega$.

Furthermore,  a.s. in $s\in S$, applying the \emph{a priori} estimate \eqref{eq:a_priori_u_i} for \eqref{ses}, we have
\begin{equation}\label{apriori}
\|u^n_i(s,\cdot)\|_{\Hzo\Omega}\leq \alpha^{-1}\| f_i(s,\cdot)\|_{\Hmo\Omega}\,,\;i=1,\ldots, m\,.
\end{equation}
Using the Sobolev embedding mentioned above, we further obtain a.s.
\[\|u^n_i(s,\cdot)\|_{\Lp q\Omega}\leq C\alpha^{-1}\| f_i(s,\cdot)\|_{\Hmo\Omega}\,,\;i=1,\ldots, m\,
\]
for any $q \in [1,q_\ast)$. %
 The right-hand side of this inequality (for any $i$) belongs to $\Lp pS$. Combined with the strong $\Lp q\Omega$ convergence of $u^n_i(s,\cdot)$ (for $q \in [1,q_\ast)$) %
 and  Lebesgue's dominated convergence theorem \cite[Proposition 1.2.5]{HNVW}, this implies  the strong convergence of 
$\vu^n$ to $\vu$ in $\Lp p{S;\Lp q{\Omega;\R^m}}$.   Consequently, there exists a function $U\in\Lp p{S;\Lp q{\Omega}}$ such that, for a subsequence that we shall relabel, $|\vu^n|\leq U$ almost surely-almost everywhere on $S\times\Omega$.

It follows that $\vu_n \to \vu$ in $\Lp {\min(p,q)}{S \times \Omega;\R^m}$. Hence, up to a further subsequence,  $\vu_n(s,\mx)\to \vu(s,\mx)$ for a.s.-a.e. $(s,\mx)\in S\times\Omega$, and therefore
\begin{equation}\label{ggamap}
g_\gamma (s,\mx,\vu_n(s,\mx))\to g_\gamma(s,\mx,\vu(s,\mx))\,, \;\gamma\in\{\alpha,\beta\}, \hbox{ a.s.-a.e. } (s,\mx)\in S\times\Omega\,,
\end{equation}
since $g_\alpha$ and $g_\beta$ are Carath\'eodory functions.  
  By the growth conditions (\ref{growth1}), we have
\begin{equation}\label{growthproof}
|g_\gamma (s,\mx,\vu_n(s,\mx))|\leq \varphi_\gamma(s,\mx) + \psi_\gamma(s,\mx){U(s,x)}^{q_1}\,.
\end{equation}
Since $U^{q_1}\in\Lp {\frac p{q_1}}{S;\Lp {\frac q{q_1}}\Omega}$, an application of H\"older's inequality to the second term on the right-hand side (see \cite[Lemma IV.1.4]{GGZ}, or the analogus result for the mixed-norm Lebesgue spaces \cite{BP})
yields $\psi_\gamma U^{q_1}\in\Lone{S\times\Omega}$ if
\[
\frac 1{q_2}+\frac {q_1}p=1\,,\; \frac 1{q_3}+\frac {q_1}q=1
\]
which hold by assumption.%

Since the right-hand side of \eqref{growthproof} belongs to $\Lp 1{S\times\Omega}$, the pointwise convergence in \eqref{ggamap}, together with the dominated convergence theorem, implies that 
$g_\gamma (\cdot,\cdot,\vu_n)$
converges to $g_\gamma (\cdot,\cdot,\vu)$ strongly in $\Lone{S\times\Omega}$, for $\gamma\in\{\alpha,\beta\}$.

Finally, since $\theta_n\weakstar\theta$ in  $\Linf\Omega$ we conclude $\cJ(\theta_n,\bA_n)\to \cJ(\theta,\bA)$ in $\Lone S$. Indeed, we observe that
\begin{align*}
&\int_S \left|\cJ(\theta_n, \bA_n)- \cJ(\theta, \bA)\right|\,\mathrm{d}\mathbb{P}(s)\\
&=     \int_S \left|\int_\Omega \Bigl(\theta_n\, g_\alpha(\vu_n) + (1-\theta_n)\, g_\beta(\vu_n)\Bigr) - \Bigl(\theta g_\alpha(\vu) + (1-\theta)g_\beta(\vu)\Bigr)\,\mathrm{d}\mx\,\right|\mathrm{d}\mathbb{P}(s)\\
&=     \int_S \Big|\int_\Omega \Bigl(\theta_n\, (g_\alpha(\vu_n)-g_\alpha(\vu)) + (1-\theta_n)\, (g_\beta(\vu_n)-g_\beta(\vu))\Bigr)\,\mathrm{d}\mx \,\\
&\qquad+ \int_\Omega \Bigl((\theta_n-\theta)\, g_\alpha(\vu) + (\theta-\theta_n)\, g_\beta(\vu)\Bigr)\,\mathrm{d}\mx\,\Big|\,\mathrm{d}\mathbb{P}(s)\\
&\leq     \int_S \int_\Omega \left|\theta_n\, (g_\alpha(\vu_n)-g_\alpha(\vu))\right|\,\mathrm{d}\mx \,\mathrm{d}\mathbb{P}(s) + \int_S \int_\Omega \left|(\theta_n-\theta)\, g_\alpha(\vu)\right|\,\mathrm{d}\mx \,\mathrm{d}\mathbb{P}(s) \\
&\qquad + \int_S\int_\Omega \left| (1-\theta_n)\, (g_\beta(\vu_n)-g_\beta(\vu))\right|\,\mathrm{d}\mx \,\mathrm{d}\mathbb{P}(s)\\
&\qquad+ \int_S\int_\Omega \left|(\theta-\theta_n)\, g_\beta(\vu)\,\right|\,\mathrm{d}\mx \,\mathrm{d}\mathbb{P}(s)\\
&= \mathrm{I} + \mathrm{II} + \mathrm{III} + \mathrm{IV}.
\end{align*}
Now, we have
\begin{align*}
    \mathrm{I} &=\int_S\int_\Omega  \left|\theta_n\, (g_\alpha(\vu_n)-g_\alpha(\vu))\right| \,\mathrm{d}\mx \,\mathrm{d}\mathbb{P}(s) \\
    &\leq \lVert \theta_n \rVert_{L^\infty(\Omega)}\lVert g_\alpha(\vu_n)-g_\alpha(\vu)\rVert_{L^1(S\times\Omega)} \to 0
\end{align*}
since $(\theta_n)$ is uniformly bounded in $L^\infty$ due to weak-$\ast$ convergence, and the second factor tends to zero by the strong $\Lone {S\times\Omega}$ convergence established above. Next, consider the term $\mathrm{II}$ above.
Using the weak-$\ast$ convergence, we find 
\[h_n(s) := \int_\Omega |(\theta_n-\theta)\, g_\alpha(\vu)(s)| \,\mathrm{d}\mx\to 0 \quad\text{pointwise a.s. in $s$.}\]
Moreover, we have the bound 
\begin{align*}
h_n(s)  \leq C    \int_\Omega |g_\alpha(\vu)(s)|\,\mathrm{d}\mx \leq C\int_\Omega \varphi_\gamma(s) + \psi_\gamma(s){|\vu(s)|}^{q_1} \,\mathrm{d}\mx
\end{align*}
by the growth condition \eqref{growth1}. The right-hand side belongs to $L^1(S)$ by assumption, as explained previously. Therefore, the dominated convergence theorem implies that $h_n \to 0$ in $L^1(S)$, i.e.,  $\int_S h_n(s) \to 0$, which shows that $\mathrm{II}$ converges to 0. The terms $\mathrm{III}$ and $\mathrm{IV}$ can be treated similarly. This establishes that $J(\theta_n,\bA_n)\to J(\theta,\bA)$ in $\Lone S$.

As mentioned before, the  topology introduced on $\cA$ is metrizable, which enables us to conclude that $\cJ\colon \cA\to\Lone S$ is continuous. 
It follows that the sum $F:= \cR \circ \cJ + \varrho$ is also lower semicontinuous with respect to weak$-*$ topology for $\theta$ and $H$-topology for $\bA$.
Now the result follows by the Weierstrass theorem, see \cite[\S7, Theorem 7.3.1]{CFA}. %
\end{proof} 

\section{Necessary conditions of optimality}\label{sec:optimality}
In this section we will derive first-order stationarity conditions for \eqref{eq:min_problem_intro}. We will denote by $(\ths,\As)$  an optimal relaxed solution of \eqref{eq:min_problem_intro}, the existence of which is assured by \cref{lemma:exist}. We need additional regularity on the functions $g_\alpha$ and $g_\beta$ in the form of the next assumption.
\begin{assumption}\label{ass:on_g_growth_second}
The functions $\frac{\partial g_\alpha}{\partial u_i}$ and $\frac{\partial g_\beta}{\partial u_i}$ are supposed to be Carath\'eodory functions (measurable in $(s,\mx)$ and continuous in $\vu$) satisfying the  growth conditions
\begin{equation}\label{growder}
\left|\frac{\partial g_\gamma}{\partial u_i}(s,\mx,\vu)\right|\leq \widetilde\varphi_\gamma(s,\mx) 
+ \widetilde\psi_\gamma(s,\mx){|\vu|}^{q_1-1} \qquad{\rm for}\; 
\gamma=\alpha,\beta; i=1,\ldots, m, 
\end{equation}
where $\widetilde\varphi_\gamma\in\pL{p'}{S;\pL{q_\ast'}{\Omega}}$, 
$\widetilde\psi_\gamma\in\pL{q_4}{S;\pL{{q_5}}{\Omega}}$ with $1\leq q_1\leq\min\{p,q_\ast\}$, $q_4=\displaystyle\frac{p}{p-q_1}$,  and
$q_5=\displaystyle\frac{q_*}{q_*-q_1}$.
\end{assumption}
In \cite{MT} (and similarly \cite[Theorem 3.2.13]{A} for multiple state problems), the G\^{a}teaux differential of the  functional $\cJ(\cdot,\cdot)(s)\colon \cA\to\R$ is computed for a.s. $s\in S$. Using this and arguing like in the proof of \cref{lemma:exist}, we obtain the G\^{a}teaux differentiability of $\cJ\colon \cA\to\Lone S$ (see also \cite{Vran}):
\begin{equation}\label{gat}
\begin{aligned}
\cJ'(\ths,\As)(\dth,\dA)(s)&=\int_\Omega \dth\bigl[g_\alpha(s,\mx,\us(s,\mx))-g_\beta(s,\mx,\us(s,\mx))\bigr]\,\mathrm{d}\mx\\
        &\quad-  \int_\Omega \sum_{i=1}^m\dA\nabla u_i^\ast(s,\mx)\cdot\nabla p_i^\ast(s,\mx)\,\mathrm{d}\mx,
\end{aligned}
\end{equation}
where $\ps=(p_1^\ast,\ldots, p_m^\ast)$ denotes the adjoint state, defined as the unique solution\footnote{Existence and uniqueness for \eqref{adjoint} can be proved in a similar fashion as \cref{lemma:exist}; we omit the details.} of  
\begin{equation}\label{adjoint}
\left\{
\begin{array}{l}
-\div(\As\nabla p_i)=\ths\frac{\partial g_\alpha}{\partial u_i}(\cdot,\cdot,\us)+(1-\ths)\frac{\partial g_\beta}{\partial u_i}(\cdot,\cdot,\us)\\
p_i\in \Lp {p'}{S;\Hzo\Omega}\\
\end{array}
\right.\;i=1,\ldots, m.
\end{equation}

\smallskip

In fact, we will prove that $\cJ$ is differentiable in the Fr\'echet sense (see also \cite[Proposition 7]{MT}). This will be used later when we come to apply the chain rule. For this purpose, we need the following technical lemma.
\begin{lemma}\label{strong}
    If $\bA_n \to \bA$ in $\Linf{\Omega;\cM_{\alpha, \beta}}$ and $f_n\to f \in \Hmo\Omega$, the sequence $(u_n)$ of solutions to 
    \[
    \left\{
\begin{array}{l}
-\div(\bA_n\nabla u_n)=f_n\\
u_n\in {\rm H}^1_0(\Omega)\,\\
\end{array}
\right.
    \]
    converges strongly to $u$ in $\Hzo\Omega$, where $u$ is the solution of \eqref{sdif}.
\end{lemma}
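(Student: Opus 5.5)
The plan is a direct energy estimate on the difference $u_n-u$, using the uniform coercivity built into $\Mab$. Subtracting the equation for $u$ from the one for $u_n$ gives, in $\Hmo\Omega$,
\[
-\div\bigl(\bA_n\nabla(u_n-u)\bigr) = (f_n-f) + \div\bigl((\bA_n-\bA)\nabla u\bigr).
\]
I will test this identity with $u_n-u\in\Hzo\Omega$. Since $\bA_n\in\Mab$, we have $\bA_n\mxi\cdot\mxi\geq\alpha|\mxi|^2$ a.e., so the left-hand side controls $\alpha\|\nabla(u_n-u)\|^2_{\pL2\Omega}$. The right-hand side is bounded by
\[
\|f_n-f\|_{\Hmo\Omega}\|u_n-u\|_{\Hzo\Omega} + \|\bA_n-\bA\|_{\Linf{\Omega}}\|\nabla u\|_{\pL2\Omega}\|\nabla(u_n-u)\|_{\pL2\Omega}.
\]
Dividing by $\|u_n-u\|_{\Hzo\Omega}$, with the trivial case $u_n=u$ handled separately, yields
\[
\|u_n-u\|_{\Hzo\Omega}\leq \alpha^{-1}\Bigl(\|f_n-f\|_{\Hmo\Omega}+\|\bA_n-\bA\|_{\Linf\Omega}\,\|u\|_{\Hzo\Omega}\Bigr).
\]
Both terms tend to zero by hypothesis. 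The factor $\|u\|_{\Hzo\Omega}$ is fixed and finite; by Lax--Milgram it is at most $\alpha^{-1}\|f\|_{\Hmo\Omega}$.

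Some preliminary bookkeeping is needed first. Each $u_n$ and $u$ exists uniquely by Lax--Milgram, exactly as in the well-posedness lemma for \eqref{ses}. The limit $\bA$ should lie in $\Mab$, or at least be a bounded measurable matrix field. This holds because $\Mab$ is closed under a.e. convergence and hence under $\Linf{}$ convergence, since the defining inequalities pass to the limit pointwise. I interpret $\Linf{\Omega;\cM_{\alpha,\beta}}$ convergence as convergence in the $\Linf{}$ matrix norm. The operator-norm bound $|(\bA_n-\bA)\nabla u|\leq\|\bA_n-\bA\|_{\Linf{}}|\nabla u|$ then holds pointwise.

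I do not expect a real obstacle; the argument is elementary. The one delicate point is to place the perturbation on the fixed limit $u$, whose gradient is a fixed $\pL2{}$ function, rather than on $u_n$. This choice requires only the coercivity of $\bA_n$, which is uniform in $n$, and no bound on $\nabla u_n$. Alternatively, one could put the perturbation on $u_n$ and use the uniform a priori estimate $\|u_n\|_{\Hzo\Omega}\leq\alpha^{-1}\sup_n\|f_n\|_{\Hmo\Omega}$ together with coercivity of $\bA$; this also works. Note that strong $\Linf{}$ convergence of the coefficients is essential here: under mere $H$-convergence the conclusion would fail, and only weak convergence would hold.
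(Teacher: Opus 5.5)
Your argument is correct, but it is organized differently from the paper's. The paper first uses that $\bA_n\to\bA$ in $\Lone\Omega$ implies $H$-convergence \cite[Lemma 1.2.22]{A}. This gives $u_n\rightharpoonup u$, hence a bound on $\nabla u_n$. It then writes $-\div(\bA\nabla u_n)=f_n-\div((\bA-\bA_n)\nabla u_n)$ and estimates $\|(\bA-\bA_n)\nabla u_n\|_{\pL2\Omega}\leq\|\bA-\bA_n\|_{\Linf\Omega}\|\nabla u_n\|_{\pL2\Omega}\to0$. It concludes because the fixed operator $v\mapsto-\div(\bA\nabla v)$ is an isomorphism $\Hzo\Omega\to\Hmo\Omega$. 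That is exactly the alternative you mention at the end. You are right that the bound on $u_n$ follows directly from the a priori estimate, so the detour through $H$-convergence is not needed.

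Your main route instead puts the perturbation on the fixed limit $u$ and uses the coercivity of $\bA_n$, which is uniform over $\Mab$. This gains two things:
\begin{enumerate}
\item You get the explicit bound $\|u_n-u\|_{\Hzo\Omega}\leq\alpha^{-1}\|f_n-f\|_{\Hmo\Omega}+\alpha^{-2}\|\bA_n-\bA\|_{\Linf\Omega}\|f\|_{\Hmo\Omega}$. Applied pathwise in $s$ in \cref{lem:J_is_C1}, it integrates directly to convergence of the states in $\Lp p{S;\Hzo\Omega}$ and of the adjoints in $\Lp{p'}{S;\Hzo\Omega}$. The a.s.\ convergence plus dominated convergence step used there becomes unnecessary.
\item Since $\nabla u$ is a fixed $\pL2\Omega$ field and matrices in $\Mab$ have operator norm at most $\beta$, dominated convergence gives $(\bA_n-\bA)\nabla u\to0$ in $\pL2\Omega$ already when $\bA_n\to\bA$ merely a.e. So your proof yields strong convergence under weaker hypotheses than uniform convergence, losing only the rate. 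The paper's step, with only a bound on $\nabla u_n$, relies on the $L^\infty$ norm in an essential way.
\end{enumerate}
Consequently, your closing remark that $L^\infty$ convergence is essential overstates what your own argument needs. The real dividing line is between pointwise (a.e.) convergence of the coefficients and mere $H$-convergence.
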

\begin{proof}
    The boundedness of $\Omega$ implies that $\bA_n \to \bA$ in $\Lone{\Omega;{\rm M}_d(\R)}$, and hence, by \cite[Lemma 1.2.22]{A}, $\bA_n \to \bA$ in the sense of $H$-convergence. Hence $u_n \rightharpoonup u$ in $\Hzo\Omega$. To establish strong convergence,  first write
    \[-\div (\bA \nabla u_n) = f_n-\div ((\bA-\bA_n)\nabla u_n) =: g_n.\]
    Since $\nabla u_n \rightharpoonup \nabla u$ in $L^2(\Omega)$, the sequence $(\nabla u_n)$  is bounded in $L^2(\Omega)$, and we deduce that $(\bA-\bA_n)\nabla u_n \to 0$ in $L^2(\Omega; \mathbb{R}^d)$. This implies  $\nabla \cdot (\bA-\bA_n)\nabla u_n \to 0$ in $\Hmo\Omega$, i.e., $g_n \to f$ in $\Hmo\Omega$. Finally,  since the mapping $u\mapsto -\div (\bA \nabla u)$ is an isomorphism between $\Hzo\Omega$ and $\Hmo\Omega$, we conclude that $u_n \to u$ strongly in $\Hzo\Omega$.
\end{proof}
\begin{lemma}\label{lem:J_is_C1}
Let \cref{ass:on_gs_first,ass:onBothFunctionals,ass:on_g_growth_second} hold. Then, the mapping $\cJ\colon \cA\to\Lone S$ is continuously differentiable. 
\end{lemma}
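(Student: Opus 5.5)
We regard $\cJ$ as defined on the ambient space $\Linf{\Omega}\times\Linf{\Omega;\Sym}$ (restricted to a neighbourhood of $\cA$ in which the matrices stay uniformly coercive and bounded) with the strong $\mathrm{L}^\infty$ topology. The plan is to show that $\cJ$ is Fr\'echet differentiable there with derivative given by \eqref{gat}, and that the map $(\theta,\bA)\mapsto\cJ'(\theta,\bA)$ is continuous into $\mathcal{L}(\Linf{\Omega}\times\Linf{\Omega;\Sym},\Lone S)$. The $\theta$-dependence is affine, so its derivative $\dth\mapsto\int_\Omega\dth\,(g_\alpha(\vu)-g_\beta(\vu))$ is exact. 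The work concerns the $\bA$-dependence through $\vu=G(\bA)$.

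\textbf{Step 1 (differentiability of $G$).} For $\bA+\dA$, set $\vu^\delta=G(\bA+\dA)$. Then $w_i=u^\delta_i-u_i$ solves
\[
-\div((\bA+\dA)\nabla w_i)=\div(\dA\nabla u_i),
\]
so $\|w_i(s)\|_{\Hzo\Omega}\le C\|\dA\|_{\infty}\|f_i(s)\|_{\Hmo\Omega}$. Let $v_i$ solve $-\div(\bA\nabla v_i)=\div(\dA\nabla u_i)$. The remainder $r_i=w_i-v_i$ solves $-\div(\bA\nabla r_i)=\div(\dA\nabla w_i)$, hence
\[
\|r_i(s)\|_{\Hzo\Omega}\le C\|\dA\|_\infty^2\|f_i(s)\|_{\Hmo\Omega}.
\]
Taking $\Lp pS$ norms shows that $G$ is Fr\'echet differentiable into $[\Lp p{S;\Hzo\Omega}]^m$, with a remainder that is quadratic uniformly in $s$. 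Continuity of $\bA\mapsto G'(\bA)$ follows from the same resolvent-difference estimates, in the spirit of \cref{strong}.

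\textbf{Step 2 (Nemytskii operator).} Next we show that $\vu\mapsto g_\gamma(\cdot,\cdot,\vu)$ is $C^1$ from $\Lp p{S;\Lp{q}{\Omega}}^m$ into $\Lone{S\times\Omega}$, with derivative $h\mapsto\sum_i\partial_{u_i}g_\gamma(\vu)h_i$. By \cref{ass:on_g_growth_second} and H\"older's inequality in mixed norms (exponents $p',q_\ast'$ and $q_4,q_5$ paired with $U^{q_1-1}$), $\partial_{u_i}g_\gamma(\vu)$ lies in the dual space $\Lp{p'}{S;\Lp{q'}{\Omega}}$. For the mean-value remainder we write
\[
\int_0^1\bigl(\partial g_\gamma(\vu+th)-\partial g_\gamma(\vu)\bigr)h\,\mathrm dt .
\]
This term is $o(\|h\|)$ by the standard Krasnosel'skii continuity of Nemytskii operators: along subsequences we have a.e. convergence, a dominating function as in the proof of \cref{lemma:exist}, and then dominated convergence, followed by a subsequence-of-subsequence argument.

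\textbf{Step 3 (main obstacle: endpoint exponents).} This is the delicate step. When $q_1=q_\ast$ or $q_1=p$ the exponents are sharp, and the embedding $\Hzo\Omega\subset\Lp{q_\ast}\Omega$ is only continuous, not compact. The Nemytskii continuity argument must therefore be carried out with the embedding at exponent $q_\ast$ itself, using convergence in measure together with uniform integrability of $|\partial g(\vu_n)|^{p'}$-type terms, rather than compactness. I would first try the Vainberg-type theorem: a Carath\'eodory map with the stated growth is continuous between the corresponding mixed Lebesgue spaces, and I would adapt it to mixed norms via \cite{BP}.

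\textbf{Step 4 (chain rule and adjoint).} The chain rule then gives $\cJ'(\theta,\bA)(\dth,\dA)$ as the $\dth$-term plus
\[
\int_\Omega\sum_i\bigl(\theta\,\partial_{u_i}g_\alpha+(1-\theta)\,\partial_{u_i}g_\beta\bigr)v_i .
\]
Testing the adjoint equation \eqref{adjoint} with $v_i$ and the $v_i$-equation with $p_i$, a.s. in $s$, converts this into $-\int_\Omega\sum_i\dA\nabla u_i\cdot\nabla p_i$, which recovers \eqref{gat}. Continuity of the derivative in $(\theta,\bA)$ follows from two ingredients: the continuity of $G$ and of the Nemytskii derivatives from Steps 1 and 2, and the continuous dependence of $\ps$ on $(\theta,\bA)$ in $\Lp{p'}{S;\Hzo\Omega}$, which is obtained from \cref{strong} applied a.s. together with dominated convergence in $s$.
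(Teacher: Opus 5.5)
Your proof is correct, but it is organized differently from the paper's. The paper does not prove Fr\'echet differentiability directly. It takes the G\^ateaux derivative \eqref{gat} from the deterministic literature, computed a.s.\ in $s$ and lifted to $L^1(S)$ by the argument of \cref{lemma:exist}. It then shows only that $(\theta,\bA)\mapsto\cJ'(\theta,\bA)$ is continuous in operator norm along $L^\infty$-convergent sequences, i.e.\ that $g_\alpha(\vu^n)-g_\beta(\vu^n)$ and $\nabla u_i^n\cdot\nabla p_i^n$ converge in $L^1(S\times\Omega)$. Fr\'echet $C^1$ is then the standard consequence of a continuous G\^ateaux derivative.

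You instead write $\cJ$ as a composition of $G$, the embedding into $L^p(S;L^{q_*}(\Omega))$, the Nemytskii maps and the bounded bilinear pairing with $\theta$. You prove each piece is $C^1$ on an open $L^\infty$-neighbourhood of $\cA$, obtain the derivative by the chain rule, and recover \eqref{gat} through the adjoint. This buys three things:
\begin{itemize}
\item The argument is self-contained: it does not rely on the cited formula or on the only-sketched lifting from a.s.\ to $L^1(S)$.
\item Your second-order resolvent identity gives estimates for $G$ and $G'$ that are uniform in $s$. This removes the a.s.-convergence-plus-domination step the paper uses for the states. For the adjoints, $\|p_i^n(s)-p_i(s)\|_{H^1_0}\le C\|\bA_n-\bA\|_\infty\|F_n(s)\|_{H^{-1}}+C\|F_n(s)-F(s)\|_{H^{-1}}$, with $F_n$ the adjoint right-hand sides, gives $L^{p'}(S)$ convergence directly.
\item It makes explicit the open set on which ``$C^1$'' is meant. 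That is what the Clarke-type arguments (strict differentiability, local Lipschitz continuity) in \cref{prop:explicit_stat_condition} actually require.
\end{itemize}
In your framework continuity of $\cJ'$ follows from continuity of $G'$ and of the Nemytskii derivatives alone; the adjoint only serves to rewrite $\cJ'$. The paper's route is shorter, given the literature.

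Your Step 3 overestimates the difficulty. Since $G$ is already continuous into $L^p(S;H^1_0(\Omega))$, only the continuous embedding into $L^{q_*}$ is ever used. The paper's argument for \eqref{derpc} then works verbatim:
\begin{itemize}
\item pass to a subsequence with a dominating $V\in L^p(S;L^{q_*}(\Omega))$;
\item note that $\widetilde\varphi_\gamma+\widetilde\psi_\gamma V^{q_1-1}\in L^{p'}(S;L^{q_*'}(\Omega))$ by the exponent relations;
\item apply dominated convergence in $\mx$ and then in $s$.
\end{itemize}
No uniform integrability or Vainberg-type theorem is needed, and the ``endpoint'' cases only mean $q_4=\infty$ or $q_5=\infty$. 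Do take $q=q_*$ already in Step 2. With the dominating $U\in L^p(S;L^q(\Omega))$, $q<q_*$, from \cref{lemma:exist}, $\partial_{u_i}g_\gamma(\vu)$ need not lie in $L^{p'}(S;L^{q'}(\Omega))$.
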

\begin{proof}
Let $\bA_n\to \bA$ strongly in $\Linf{\Omega;\cM_{\alpha, \beta}}$ and $\theta_n\to\theta$ strongly in $\Linf{\Omega}$, and denote the corresponding states $\vu^n = G(\bA_n)$ and $\vu = G(\bA)$.
Our goal is to show that $J'(\theta_n,\bA_n)$ converges to $J'(\theta,\bA)$ strongly in the operator norm. To this end, observe that
\begin{align*}
&\lVert (\cJ'(\theta_n,\bA_n)-\cJ'(\ths, \As))(\dth,\dA)\rVert_{L^1(S)}\\
&\quad= \int_S \Big|\int_\Omega \dth\bigl[g_\alpha(\vu^n)-g_\beta(\vu^n) - (g_\alpha(\us)-g_\beta(\us))\bigr]\,\mathrm{d}\mx\\
&\quad\quad-  \int_\Omega \sum_{i=1}^m\dA\nabla u_i^n\cdot\nabla p_i^n - \sum_{i=1}^m\dA\nabla u_i^\ast\cdot\nabla p_i^\ast\,\mathrm{d}\mx\Big|\,\mathrm{d}\mathbb{P}(s)\\
&\quad\leq \int_S \int_\Omega |\dth\bigl[g_\alpha(\vu^n)-g_\beta(\vu^n) - (g_\alpha(\us)-g_\beta(\us))\bigr]|\,\mathrm{d}\mx \,\mathrm{d}\mathbb{P}(s)\\
&\quad\quad+  \int_S \int_\Omega \sum_{i=1}^m|\dA\nabla u_i^n\cdot\nabla p_i^n - \dA\nabla u_i^\ast\cdot\nabla p_i^\ast|\,\mathrm{d}\mx\,\mathrm{d}\mathbb{P}(s)\\
&\quad\leq \lVert \dth\rVert_{L^\infty(\Omega)}\int_S \int_\Omega |g_\alpha(\vu^n)-g_\beta(\vu^n) - (g_\alpha(\us)-g_\beta(\us))|\,\mathrm{d}\mx\,\mathrm{d}\mathbb{P}(s)\\
&\quad\quad+  \lVert \dA\rVert_{\Linf{\Omega; \Sym}}\int_S \int_\Omega \sum_{i=1}^m| \nabla u_i^n\cdot\nabla p_i^n -  \nabla u_i^\ast\cdot\nabla p_i^\ast|\,\mathrm{d}\mx\,\mathrm{d}\mathbb{P}(s).
\end{align*} 
The first integral on the right-hand side can be shown to vanish by following the argument in the proof of  \cref{lemma:exist}, which yields the strong convergence $g_\alpha(\cdot,\cdot,\vu_n)-g_\beta(\cdot,\cdot,\vu_n)$ to
$g_\alpha(\cdot,\cdot,\vu)-g_\beta(\cdot,\cdot,\vu)$  in $\Lone{S\times\Omega}$.%

For the second integral, \cref{strong} shows that
$u_i^n(s,\cdot)\to u_i(s,\cdot)$ in $\Hzo\Omega$ for each $i$ and almost surely in $s\in S$. As in the proof of \cref{lemma:exist}, using 
the \emph{a priori} bound \eqref{eq:a_priori_u_i} and the dominated convergence theorem, we obtain the strong convergence $\vu_n\to\vu$ in $\Lp p{\Omega;\Hzo{\Omega;\R^m}}$, i.e.~$\nabla u^n_i\to\nabla u_i$ in $\Lp p{\Omega;\Lp 2{\Omega;\R^n}}$. To obtain an analogous conclusion for the adjoint functions $p_n$, we first show the strong convergence 
\begin{equation}\label{derpc}
\frac{\partial g_\gamma}{\partial u_i} (\cdot,\cdot,\vu_n)\to \frac{\partial g_\gamma}{\partial u_i}(\cdot,\cdot,\vu)\, \;\hbox{in } \Lp {p'}{S;\Lp {q_\ast'}\Omega}\;\hbox{for}\;\gamma\in\{\alpha,\beta\}\,.
\end{equation}
This can be proved in a manner analogous to the proof of \cref{lemma:exist}. The only difference occurs in the step following \eqref{apriori}, where we may take $q_\ast$ instead of $q\in[1,q_\ast)$, %
since the compactness of the embedding $\Hzo\Omega\hookrightarrow \Lp {q}\Omega$ is not required. As a result, 
$\vu_n\to\vu$ strongly in $\Lp p{S;\Lp {q_\ast}{\Omega;\R^m}}$, and passing to a subsequence, $|\vu_n|\leq V$ almost surely-almost everywhere on $S\times\Omega$ for some $V\in\Lp p{S;\Lp {q_\ast}{\Omega}}$.
Now, along a further subsequence, we have pointwise a.s.-a.e. convergence of the convergence that appears in \eqref{derpc}, and by the growth conditions (\ref{growder}), 
\[
\left|\frac{\partial g_\gamma}{\partial u_i}(s,\mx,\vu_n(s,\mx))\right|\leq \widetilde\varphi_\gamma(s,\mx) + \widetilde\psi_\gamma(s,\mx){V(s,x)}^{q_1-1}\,.
\]
We have $\displaystyle V^{q_1-1}\in\Lp {\frac p{q_1-1}}{S;\Lp {\frac {q_\ast}{q_1-1}}\Omega}$, so  by  H\"older's inequality %
 $\widetilde\psi_\gamma V^{q_1-1}\in\Lp {p'}{S;\Lp {q_\ast'}\Omega}$ provided that
\[
\frac 1{q_4}+\frac {q_1-1}p=\frac 1{p'}=\frac{p-1}p\,,\;\, \frac 1{q_5}+\frac {q_1-1}{q_*}=\frac 1{q_*'}=\frac{q_*-1}{q_*}
\]
which hold by assumption \eqref{growder}. Applying the dominated convergence theorem then yields \eqref{derpc}.

Then, the right-hand side in the adjoint equation %
\[
\left\{
\begin{array}{l}
-\div(\bA_n\nabla p^n_i)=\theta_n\frac{\partial g_\alpha}{\partial u_i}(\cdot,\cdot,\vu_n)+(1-\theta_n)\frac{\partial g_\beta}{\partial u_i}(\cdot,\cdot,\vu_n)\\
p^n_i\in \Lp {p'}{S;\Hzo\Omega}\\
\end{array}
\right.\qquad i=1,\ldots, m,
\]
belongs to  $\Lp {p'}{S;\Lp {q_\ast'}\Omega}$, which is continuously embedded in $\Lp {p'}{S;\Hmo\Omega}$, yielding the unique solution $p^n_i\in\Lp {p'}{S;\Hzo\Omega}$. Moreover, from \eqref{derpc} and \cref{strong}, we have convergence $\vp_n(s,\cdot)\to\vp(s,\cdot)$ in $\Hzo\Omega$ almost surely in $s$. As shown for the sequence $(\vu_n)$ in the proof of \cref{lemma:exist}, this implies the strong convergence $\vp_n\to\vp$ in $\Lp {p'}{\Omega;\Hzo{\Omega;\R^m}}$.

Finally, we conclude that $\nabla u^n_i\cdot\nabla p^n_i\to\nabla u_i\cdot\nabla p_i$ strongly in $\Lp {1}{S\times\Omega}$, which completes the proof.
\end{proof}
To write down the optimality conditions for \eqref{eq:min_problem_intro}, we need some further assumptions on $\varrho$. One of those requires the notion of \emph{regularity} in the sense of Clarke \cite[Definition 2.3.4, p.~39]{Clarke}. Indeed, the function $\rho$ is said to be \emph{regular} at $\theta$ if the (usual one-sided) directional derivative at $\theta$ exists for all directions $v$ and it equals the \emph{generalized directional derivative} $\rho^{\circ}(\theta; v) = \limsup_{\eta \to \theta, t \downarrow 0}(\rho(\eta + tv)-\rho(\eta))\slash t.$

We make the following assumption.
\begin{assumption}\label{ass:new_on_varrho}
\begin{enumerate}[label=(\roman*)]\itemsep=0em
    \item\label{item:Hdiff} 
Let  $\varrho\colon \Linf{\Omega;[0,1]} \to \R$ be Hadamard directionally differentiable at $\ths$. 
\item\label{item:ClarkeReg} Let $\varrho$  be locally Lipschitz and regular at $\theta^*$.
\end{enumerate}
\end{assumption}
Let us try to write down a first optimality condition.  We argue in a similar fashion to \cite{AGHS}. We start by recalling the definition of the tangent (Bouligand) cone of $\cA$ at the point $(\theta, \bA)$:
\begin{align*}
&T_{\cA}(\theta, \bA)
:=  \{ (h, d)  \in \Linf{\Omega; [0,1]}\times \Linf{\Omega; \Sym} \mid \exists s_k \searrow 0, \;\\
&\quad  \exists (h_k, d_k) \to (h,d), %
(\theta, \bA) + s_k(h_k, d_k) \in \cA \hbox{ for each } k\}.
\end{align*}
\begin{lemma}\label{lem:stationarity_supremum}
Let \cref{ass:on_gs_first,ass:onBothFunctionals,ass:on_g_growth_second} and \cref{ass:new_on_varrho} \ref{item:Hdiff} hold. If $(\ths,\As)$ is an optimal relaxed solution of \eqref{eq:min_problem_intro}, then, for every $(\dth, \dA) \in T_{\cA}(\theta^*, \bA^*)$ we have
\[\sup_{\nu \in \partial\cR(\cJ(\ths,\As))} \mathbb{E}[\nu \cJ'((\ths,\As))(\dth,\dA) ]   + \varrho'(\theta^*)(\dth) \geq 0.\]
\end{lemma}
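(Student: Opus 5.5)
Take a tangent direction $(\dth,\dA)\in T_{\cA}(\ths,\As)$ with sequences $s_k\searrow 0$ and $(h_k,d_k)\to(\dth,\dA)$ in $\Linf{\Omega}\times\Linf{\Omega;\Sym}$ such that $(\theta_k,\bA_k):=(\ths,\As)+s_k(h_k,d_k)\in\cA$. Optimality of $(\ths,\As)$ gives
\[
0\le \frac{\cR[\cJ(\theta_k,\bA_k)]-\cR[\cJ(\ths,\As)]}{s_k}+\frac{\varrho(\theta_k)-\varrho(\ths)}{s_k}.
\]
We pass to the limit in each quotient. Since $h_k\to\dth$ strongly in $\Linf\Omega$ and $\varrho$ is Hadamard directionally differentiable at $\ths$ (\cref{ass:new_on_varrho} \ref{item:Hdiff}), the second quotient tends to $\varrho'(\ths)(\dth)$.

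For the first quotient we use a chain rule. By \cref{lem:J_is_C1}, $\cJ$ is continuously (hence Fréchet) differentiable on the strongly convergent path. Therefore
\[
\cJ(\theta_k,\bA_k)=\cJ(\ths,\As)+s_k\,\cJ'(\ths,\As)(\dth,\dA)+o(s_k)\quad\text{in }\Lone S.
\]
The remainder absorbs both the Fréchet error and the term $s_k\cJ'(\ths,\As)(h_k-\dth,d_k-\dA)$, which is $o(s_k)$ because $\cJ'$ is a bounded operator. One point needs care here: $\cJ$ is defined only on $\cA$, which is not convex and not open. We avoid this by noting that the formula \eqref{gat} and the proof of \cref{lem:J_is_C1} only use $\bA\in\Mab$ and $\theta\in\Linf{\Omega;[0,1]}$. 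Hence differentiability holds along $t\mapsto(\ths,\As)+t(h_k,d_k)$ as long as these points stay admissible, for example via a mean value argument on the segment, which lies in the convex set $\Linf{\Omega;[0,1]}\times\Mab$.

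Next, $\cR$ is convex and lower semicontinuous on the Banach space $\Lone S$ and real-valued, so it is continuous, hence locally Lipschitz. It is also Hadamard directionally differentiable, with
\[
\cR'(X;h)=\max_{\nu\in\partial\cR(X)}\mathbb{E}[\nu h].
\]
This is the max formula for continuous convex functions; $\partial\cR(X)$ is weak-$\ast$ compact in $\Linf S$, so the supremum is attained. Local Lipschitz continuity lets us replace $\cR[\cJ(\theta_k,\bA_k)]$ by $\cR[\cJ(\ths,\As)+s_k\cJ'(\ths,\As)(\dth,\dA)]$ at cost $o(s_k)$. The first quotient therefore converges to
\[
\sup_{\nu\in\partial\cR(\cJ(\ths,\As))}\mathbb{E}[\nu\,\cJ'(\ths,\As)(\dth,\dA)].
\]
Summing the two limits gives the claimed inequality.

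The main obstacle is the Fréchet expansion of $\cJ$ with remainder $o(s_k)$ in $\Lone S$, uniformly along the perturbed directions $(h_k,d_k)$. My first attempt would be the mean value theorem in integral form,
\[
\cJ(\theta_k,\bA_k)-\cJ(\ths,\As)=s_k\int_0^1\cJ'\bigl((\ths,\As)+ts_k(h_k,d_k)\bigr)(h_k,d_k)\,\mathrm{d}t,
\]
with the derivative evaluated on the convex set $\Linf{\Omega;[0,1]}\times\Mab$. The remainder estimate then follows from the continuity of $\cJ'$ in operator norm (\cref{lem:J_is_C1}) together with $(h_k,d_k)\to(\dth,\dA)$. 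The Bochner-integral measurability in $s$ is routine.
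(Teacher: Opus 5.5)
Your proof is correct and follows the paper's argument. Both go through optimality along the tangent sequence, division by $s_k$, Hadamard directional differentiability of $\varrho$, and a chain rule for $\cR\circ\cJ$ combined with the formula $\cR'(X;h)=\sup_{\nu\in\partial\cR(X)}\mathbb{E}[\nu h]$ for the continuous convex $\cR$. The only difference is that you prove the Hadamard chain rule by hand, via a mean-value expansion of $\cJ$ on the convex set $\Linf{\Omega;[0,1]}\times\Mab$ plus local Lipschitz continuity of $\cR$, where the paper cites Bonnans--Shapiro; this also makes explicit the domain issue (non-open, nonconvex $\cA$) that the paper leaves implicit.
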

\begin{proof}
    Firstly, since $\cR$ is finite, lower semicontinuous and convex, $\cR$ is continuous \cite[Proposition 2.111]{Bonnans2013}, %
    and by \cite[Proposition 2.126 (v)]{Bonnans2013}, we obtain that in fact $\cR$ is Hadamard directionally differentiable with
\[\cR'[z](h) = \sup_{\nu \in \partial\cR(z)} \mathbb{E}[\nu h ].\]
Now we can use the chain rule \cite[Proposition 2.47]{Bonnans2013}: since $\cJ\colon \cA \to \Lone\Omega$ is $C^1$ by \cref{lem:J_is_C1} and hence also Hadamard directionally differentiable, and $\cR$ is Hadamard directionally differentiable, we find that the composition $\cR \circ \cJ$ is Hadamard directionally differentiable too with 
\[(\cR \circ \cJ)'(\ths,\As)(\dth,\dA)= \cR'[\cJ(\ths,\As)]\cJ'((\ths,\As))(\dth,\dA), %
\]
for every $(\dth, \dA) \in T_{\cA}(\theta^*, \bA^*)$.
The set $\cA \subset \Linf{\Omega; [0,1]}\times \Linf{\Omega; \Sym}$ is in general non-convex, hence the closure of the radial cone may not be the tangent cone, so the usual density argument to obtain the claim of this result has to be adapted. 

Now, if $(\dth, \dA) \in T_{\cA}(\theta^*, \bA^*)$, by definition, we have the existence of $\{(h_k, d_k)\}$ such that $(h_k, d_k) \to (\dth, \dA)$ and a null sequence $\{s_k\}$ with
\[\cR[\cJ(\theta^* + s_kh_k, \bA^*+s_kd_k)] + \varrho(\theta^* + s_kh_k) -\cR[\cJ(\theta^*, \bA^*)] - \varrho(\theta^*) \geq 0,\]
since $(\theta^*, \bA^*)$ is optimal. Dividing the above expression by $s_k$ and taking the limit $k \to \infty$ and using the fact that $\cR \circ \cJ$ and $\varrho$ are Hadamard directionally  differentiable, we get
\[\cR'[\cJ(\ths,\As)](\cJ'((\ths,\As))(\dth,\dA)) + \varrho'(\theta^*)(\dth) \geq 0.\]
Using the characterisation of $\cR'$ in terms of the expectation stated above, we obtain the result. %
\end{proof}
As done in \cite{Corrigendum}, we wish to reformulate the above condition since the presence of the supremum  renders it inconvenient and makes it less tractable.  By means of subdifferential calculus, this reformulation is possible as we see in the next result. This result enables the adaptation of the optimality criteria method which is well established in the deterministic setting to the risk-averse setting as well. Below, $\partial \varrho$ refers to the generalized gradient in the sense of Clarke, see \cite[p.~27]{Clarke}.

\begin{theorem}\label{prop:explicit_stat_condition}
Let $(\ths,\As)$ be an optimal relaxed solution of \eqref{eq:min_problem_intro}. Let \cref{ass:on_gs_first,ass:onBothFunctionals,ass:on_g_growth_second,ass:new_on_varrho}  hold. 
Then, there exists $\pi^* \in  \partial \cR(\cJ(\theta^*, \bA^*))$ such that  for every $(\dth, \dA) \in T_{\cA}(\theta^*, \bA^*)$,
\begin{equation}\label{eq:neccon}
\begin{aligned}
 0\leq \varrho'(\theta^*)(\dth) &+\int_S \int_\Omega \pi^*\dth\bigl[g_\alpha(s,\mx,\us)-g_\beta(s,\mx,\us)\bigr]\,\mathrm{d}\mx\,\mathrm{d}\mathbb{P}(s)\\
       & -  \int_S \int_\Omega \pi^*\sum_{i=1}^m\dA\nabla u_i^\ast\cdot\nabla p_i^\ast\,\mathrm{d}\mx \,\mathrm{d}\mathbb{P}(s).     
\end{aligned}
\end{equation}
\end{theorem}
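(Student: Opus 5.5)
We plan to pass from the "sup over the subdifferential, for every direction" condition of \cref{lem:stationarity_supremum} to the existence of a single multiplier $\pi^*$ that works for all directions, in the spirit of \cite{Corrigendum}. The natural tool is a minimax (or separation) argument. To set it up, we introduce the bilinear form
\[
\Phi(\nu,(\dth,\dA)) := \mathbb{E}[\nu\,\cJ'(\ths,\As)(\dth,\dA)] + \varrho'(\ths)(\dth).
\]
\cref{lem:stationarity_supremum} then reads $\inf_{(\dth,\dA)\in C}\sup_{\nu\in\partial\cR(\cJ(\ths,\As))}\Phi\ge 0$. The set $C$ in the infimum is chosen below. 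Our goal is to interchange the infimum and the supremum.

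Two preliminary facts are needed. First, since $\cR$ is finite, convex and continuous on $\Lone S$, the subdifferential $\partial\cR(z)\subset\Linf S$ is nonempty, convex, bounded and weak-$\ast$ compact. Second, by \eqref{gat} and \cref{lem:J_is_C1}, the map $\nu\mapsto\mathbb{E}[\nu\,\cJ'(\cdot)(\dth,\dA)]$ is weak-$\ast$ continuous and affine, because $\cJ'(\ths,\As)(\dth,\dA)\in\Lone S$. In the direction variable, $\cJ'$ is linear.

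The difficulty lies with the other two ingredients. The tangent cone $T_\cA$ is not convex, since $\cA$ is not convex. Likewise, $\varrho'(\ths)(\cdot)$ is not linear; by regularity it is only sublinear. To repair this, we will use \cref{ass:new_on_varrho}\ref{item:ClarkeReg}. Regularity gives $\varrho'(\ths)(v)=\varrho^\circ(\ths;v)=\max_{\zeta\in\partial\varrho(\ths)}\langle\zeta,v\rangle$, which is convex and positively homogeneous in $v$. For the cone, we restrict to a convex subcone of $T_\cA(\ths,\As)$ on which the inequality is used. Our first attempt is to replace $T_\cA$ by its closed convex hull, or by the Clarke tangent cone, which is convex. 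The stationarity inequality persists on the convex hull because $\sup_\nu\Phi(\nu,\cdot)$ is convex: it is a supremum of affine maps in the direction plus a sublinear term. Concretely, for $v=\sum_j t_j v_j$ with $v_j\in T_\cA$, convexity of the sup gives only an upper bound, so this step must be checked carefully. The safer route is to apply the minimax argument directly on the cone $C=\mathrm{cone}\,\mathrm{conv}\,T_\cA$ and note that the desired inequality is claimed only on $T_\cA$. If this fails, we will verify via the fibrewise convexity of $K(\vartheta)$ that, at the relevant points, directions of the form $(0,\dA)$ with $\As+t\dA\in K(\ths)$ and the $\theta$-directions already form a convex set.

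With convexity in hand, we apply a minimax theorem (Ky Fan / Sion), using compactness of $\partial\cR$ in the weak-$\ast$ topology. The function $\Phi$ is concave, in fact affine, and upper semicontinuous in $\nu$, and convex in the direction variable. The theorem yields
\[
\sup_{\nu}\inf_{(\dth,\dA)\in C}\Phi=\inf_{(\dth,\dA)\in C}\sup_\nu\Phi\ge0,
\]
and the supremum on the left is attained by compactness and upper semicontinuity. Calling the maximizer $\pi^*$, we get $\Phi(\pi^*,(\dth,\dA))\ge0$ for all $(\dth,\dA)\in C\supseteq T_\cA(\ths,\As)$. Finally, we insert \eqref{gat} and use Fubini, justified by the integrability of $\pi^*\in\Linf S$ against the integrands, which are in $\Lone{S\times\Omega}$ as shown in \cref{lem:J_is_C1}. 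This gives \eqref{eq:neccon}.

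We expect the main obstacle to be the nonconvexity of $T_\cA(\ths,\As)$, which is exactly the point that prevents a direct application of the minimax theorem.
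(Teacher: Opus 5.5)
\textbf{There is a genuine gap.} Your minimax set-up is sound as far as it goes: $\partial\cR(\cJ(\ths,\As))$ is weak-$\ast$ compact, $\Phi$ is affine and weak-$\ast$ continuous in $\nu$, and $\varrho'(\ths)=\varrho^\circ(\ths;\cdot)$ is sublinear. But the one step that carries the content of the theorem beyond \cref{lem:stationarity_supremum} is never supplied. You need $\inf_{v\in C}\sup_{\nu}\Phi(\nu,v)\ge 0$ on a \emph{convex} set $C\supseteq T_{\cA}(\ths,\As)$, and none of your three options gives it.

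For $C=\mathrm{cone}\,\mathrm{conv}\,T_{\cA}(\ths,\As)$ the hypothesis is not known. Sublinearity gives $\sup_\nu\Phi(\nu,\sum_j t_jv_j)\le\sum_j t_j\sup_\nu\Phi(\nu,v_j)$, which is the wrong direction. A minimax theorem cannot create the hypothesis either: if it fails, both sides equal $-\infty$ by positive homogeneity.

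The Clarke tangent cone is convex and contained in $T_{\cA}(\ths,\As)$, so there your argument does go through. However, it yields $\pi^*$ only for Clarke-tangent directions, so your closing claim $C\supseteq T_{\cA}(\ths,\As)$ is false for that choice.

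The fallback via convexity of $K(\vartheta)$ only covers the directions $(0,\bA-\As)$. The nonconvexity of $\cA$ lives precisely in the coupling of $\theta$ and $\bA$, i.e.\ in directions with $\dth\neq0$, and you give no argument there.

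This cannot be repaired by abstract means. In $\R^2$ take the closed set $\{(a,b): b=|a|\}$, with $\cJ(a,b)=(2a-b,-2a-b)$, $\cR=\max$ (the worst case on a two-point space), and $\varrho=0$. On the set $\cR\circ\cJ=|a|$, so $0$ is a minimizer, and the analogue of \cref{lem:stationarity_supremum} holds on the Bouligand cone $\{(a,|a|)\}$. Yet a single $\nu\in\partial\cR(0)$, i.e.\ $\nu_1,\nu_2\ge0$ with $\nu_1+\nu_2=1$, would need $2(\nu_1-\nu_2)\ge1$ (from $v=(1,1)$) and $2(\nu_1-\nu_2)\le-1$ (from $v=(-1,1)$), which is impossible. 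So some structural property of $\cA$ at $(\ths,\As)$ is indispensable, for instance tangential regularity, meaning the Clarke tangent cone equals $T_{\cA}(\ths,\As)$.

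The paper avoids the sup--inf exchange altogether. It uses Clarke's condition $0\in\partial(\cR\circ\cJ+\varrho)(\ths,\As)+N_{\cA}(\ths,\As)$, the sum rule, and the chain rule $\partial(\cR\circ\cJ)=[\cJ']^*\partial\cR(\cJ)$ (an equality because convex $\cR$ is regular). This yields one $\pi^*$ and one $\eta\in\partial\varrho(\ths)$ at once, after which it bounds $\langle\eta,v\rangle\le\varrho'(\ths)(v)$. Convexity is built in there because Clarke's normal cone is the polar of the Clarke tangent cone, so your Clarke-cone variant of minimax proves exactly what that argument delivers. Note that the paper's final identification $N_{\cA}=T_{\cA}^\circ$, with $T_{\cA}$ the Bouligand cone, is the same missing step; it is justified only under regularity of $\cA$ at the optimum.
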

\begin{proof}
The proof is similar to that of \cite[Lemma 3.11]{AGHS} with some adaptations to deal with the non-convexity.  To begin, observe that $\cR$ is %
locally Lipschitz near every point of $\pL 1\Omega$ and that $\mathcal{J} \colon \cA \rightarrow \pL 1\Omega$, being $C^1$, is also strictly differentiable  (in the sense of Clarke) \cite[Corollary, p.~32]{Clarke}. By \cite[Proposition 2.2.1]{Clarke}, $\cJ$ is Lipschitz near $(\theta^*, \bA^*)$. Therefore, the composition $\cR\circ \cJ$ is locally Lipschitz near $(\theta^*, \bA^*)$.

Now, $\varrho$ is assumed locally Lipschitz on $\Linf{\Omega;[0,1]}$. In combination with the above, we thus have have that the sum $\cR\circ \cJ + \varrho$ is also locally Lipschitz. Invoking the corollary on page 52 of \cite{Clarke}, we get  
\[0 \in \partial(\cR \circ \cJ + \varrho)(\theta^*, \bA^*) + N_{\cA}(\theta^*, \bA^*)\]
where $N_{\cA}(u) = T_{\cA}(u)^\circ = \{ \eta \in (\Linf{\Omega; [0,1]}\times \Linf{\Omega; \Sym})^*: \langle \eta, v \rangle \leq 0\,, \; v \in T_{\cA}(u)\}$ is the normal cone of $\cA$ at the point $u$. Applying \cite[Proposition 2.3.3]{Clarke}, the first term on the right-hand side above can be expressed as
\[ \partial(\cR \circ \cJ + \varrho)(\theta^*, \bA^*)  \subset  \partial(\cR \circ \cJ )(\theta^*, \bA^*)+ \partial \varrho(\theta^*).\]
Since $\cR$ is locally Lipschitz on $\pL 1\Omega$, we are able to utilize the subdifferential chain rule \cite[Theorem 2.3.10 and Remark 2.3.11]{Clarke} to find
\[\partial (\cR\circ \cJ )(\theta^*, \bA^*)= [\cJ'(\theta^*, \bA^*)]^*\partial \cR(\cJ (\theta^*, \bA^*))\]
(equality holds since $\cR$ is regular \cite[Proposition 2.3.6 (b)]{Clarke} by virtue of its convexity). Gathering everything together, we have
\[0 \in [\cJ'(\theta^*, \bA^*)]^*\partial \cR(\cJ(\theta^*, \bA^*)) + \partial \varrho(\theta^*) + N_{\cA}(\theta^*, \bA^*).\]
Now, as in the proof of \cite[Lemma 3.11]{AGHS}, we follow \cite{Corrigendum}. From the above inclusion, we obtain the existence of $\pi^* \in \partial \cR(\cJ(\theta^*, \bA^*))$ and $\eta \in \partial\varrho(\theta^*)$ such that
\[-[\cJ'(\theta^*, \bA^*)]^*\pi^*  - \eta  \in N_{\cA}(\theta^*, \bA^*).\] This reads
\[\langle -[\cJ'(\theta^*, \bA^*)]^*\pi^*  - \eta, v \rangle \leq 0\,, \;  %
v \in T_{\cA}(\theta^*, \bA^*).\]
Now, since $\varrho$ is regular, by definition of the generalized gradient, we have that%
\begin{align*}
\partial \varrho(\theta^*) = \{ \xi \in (\Linf{\Omega; [0,1]})^* : \varrho'(\theta^*)(v) \geq \langle \xi, v \rangle, v \in \Linf{\Omega; [0,1]}\}.    
\end{align*}
Thus the above can be written as
\[\langle -[\cJ'(\theta^*, \bA^*)]^*\pi^*  - \varrho'(\theta^*), v \rangle \leq 0\,, \; %
v \in T_{\cA}(\theta^*, \bA^*).\]
Then
\[\mathbb{E}[\cJ'(\theta^*, \bA^*)v \pi^*]     +  \varrho'(\theta^*)(\dth)\geq 0\,, \; %
v =(\dth, \dA) \in T_{\cA}(\theta^*, \bA^*)\]
follows upon realizing $\langle \cJ'(\theta^*, \bA^*)^*\pi^*, v \rangle =\langle \pi^*, \cJ'(\theta^*, \bA^*)v \rangle $ as the $L^1(S)$-$L^1(S)^*$ pairing.
The claim of the theorem is a direct consequence of substituting the derivative formula  \eqref{gat}. 
\end{proof}

To further analyze the optimality condition of \cref{prop:explicit_stat_condition}, let us specialize to the case where the regularization term is of the form
\begin{equation}
\varrho(\theta)=\int_\Omega h(\theta)\,\mathrm{d}\mx\label{eq:form_of_rho},  
\end{equation}
where $h\colon[0,1]\to\R$ is a continuously differentiable function. In classical applications, a choice such as %
$\varrho(\theta)=\lambda\int_\Omega \theta\,\mathrm{d}\mx$ penalizes the amount  of the first phase. For greater generality, we consider instead the form stated above.

We can analyze the result of \cref{prop:explicit_stat_condition} using the same approach as in the deterministic setting \cite{MT,A}. 
The dependence on the variable 
$s$ in the optimality conditions can be addressed  in a manner reminiscent of the treatment of the time variable  in evolutionary problems \cite{MV_hyp}. We begin by fixing $\theta$ and considering variations only in $\bA$. Since $K(\vartheta)$ is convex for every $\vartheta\in[0,1]$, then for every $\bA\in K(\ths)$ we have
$(0, \bA-\As) \in T_{\cA}(\ths, \As)$.

Therefore, we obtain
\begin{align*}   
-  \int_S \int_\Omega \pi^*\sum_{i=1}^m(\bA-\bA^*)\nabla u_i^\ast\cdot\nabla p_i^\ast\,\mathrm{d}\mx\,\mathrm{d}\mathbb{P}(s)    \geq 0.
\end{align*}
By interchanging the order of integration, this may be rewritten as
\begin{align*}   
-  \int_\Omega \int_S \pi^*\sum_{i=1}^m(\bA-\bA^*)\nabla u_i^\ast\cdot\nabla p_i^\ast \,\mathrm{d}\mathbb{P}(s)\,\mathrm{d}\mx   \geq 0.
\end{align*}
We define  
\begin{equation}\label{eq:Ma}
\Ms:= {\rm Sym}\int_S \pi^* \sum_{i=1}^m \nabla u_i^\ast\otimes\nabla p_i^\ast\,\mathrm{d}\mathbb{P}(s),
\end{equation}
where ${\rm Sym}$ denotes the symmetric part of a matrix, and $\otimes$ the tensor product of two vectors in $\R^d$. 
With this notation, the above condition can be rewritten as
\begin{align*}   
\int_\Omega \bA^* : \Ms \,\mathrm{d}\mx     \geq \int_\Omega \bA : \Ms\,\mathrm{d}\mx  ,
\end{align*}
where $:$ denotes the Euclidean inner product on $\mathrm{M}_d(\R)$.
Since $\bA\in K(\ths)$ is arbitrary, we may choose $\bA$ to equal $\As$ a.e.  except in a small neighborhood of an arbitrary point. This yields the following pointwise estimate: almost everywhere on $\Omega$,  $ \bA^* : \Ms \geq \bA : \Ms$ for any $\bA\in K(\ths)$. To summarize, the necessary optimality condition can be expressed as follows: almost everywhere on $\Omega$, the optimal $\As$ satisfies
\[
 \bA^* : \Ms={F}(\ths,\Ms)\,,
\]
where ${F}\colon[0,1]\times\mathrm{M}_d(\R)\to\R$ is given by ${F}(\vartheta,\bM)=\max_{\bA\in K(\vartheta)} \bA:\bM$. %

We can now proceed as in \cite[Theorem 3.2.14]{A} and consider variations in $\theta$. Since the function ${F}$ is continuously differentiable with respect to $\theta$, we can consider a smooth path $\theta(t)\in\Linf{\Omega; [0,1]}$ passing through $\ths$ with derivative $\dth$, and choose $\bA(t)\in K(\theta(t))$ such that $\bA(t): \Ms={F}(\theta(t),\Ms)$, almost everywhere on $\Omega$.  
Consequently, the derivative of $\bA$ with respect to $t$, evaluated at $t=0$ where $\theta(0)=\ths$, satisfies
\[
\dA : \Ms=\frac{\partial {F}}{\partial \theta}(\ths,\Ms)\dth.
\]
The partial derivative of ${F}$ with respect to $\theta$ can be expressed explicitly in the two- and three-dimensional case \cite{AVmpe, V}. Finally, we come to the following result.

\begin{theorem}
Let $\varrho$ be of the form \eqref{eq:form_of_rho} and let the assumptions of \cref{prop:explicit_stat_condition} hold. If $(\ths,\As)$ is an optimal relaxed solution of \eqref{eq:min_problem_intro} and $\us$ and $\ps$ the corresponding state and adjoint state functions respectively, then, there exists $\pi^* \in  \partial \cR(\cJ(\theta^*, \bA^*))$ such that 
for $Q\colon\Omega\to\R$ defined by
\[
Q=\int_S \pi^* \bigl[g_\alpha(s,\cdot,\us)-g_\beta(s,\cdot,\us)\bigr]\,\mathrm{d}\mathbb{P}(s)-\frac{\partial {F}}{\partial \theta}(\ths,\Ms)+h'(\theta),
\]
the following  necessary optimality condition hold: for almost every $\mx\in\Omega$
\begin{align*}   
Q(\mx)&> 0\; \Longrightarrow\; \ths(\mx)=0\\
Q(\mx)&< 0\; \Longrightarrow\; \ths(\mx)=1.%
\end{align*}
\end{theorem}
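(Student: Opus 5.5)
Starting from \cref{prop:explicit_stat_condition}, fix the $\pi^*\in\partial\cR(\cJ(\ths,\As))$ it provides and reduce \eqref{eq:neccon} to a pointwise statement in $\mx$. With $\varrho$ of the form \eqref{eq:form_of_rho} and $h\in C^1[0,1]$, the directional derivative is $\varrho'(\ths)(\dth)=\int_\Omega h'(\ths)\dth\,\mathrm{d}\mx$, by dominated convergence on the difference quotients. This functional is Hadamard differentiable, locally Lipschitz and regular, since it is strictly differentiable on $\Linf{\Omega}$, so \cref{ass:new_on_varrho} holds.

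Fubini then rewrites the first integral in \eqref{eq:neccon} as $\int_\Omega \dth\, G\,\mathrm{d}\mx$, where $G(\mx)=\int_S\pi^*[g_\alpha-g_\beta](s,\mx,\us)\,\mathrm{d}\mathbb{P}(s)$. Fubini is justified because $\pi^*\in\Linf S$ (the dual of $\Lone S$) and $g_\gamma(\cdot,\cdot,\us)\in\Lone{S\times\Omega}$, as shown in the proof of \cref{lemma:exist}. Similarly, the last integral equals $-\int_\Omega \dA:\Ms\,\mathrm{d}\mx$, with $\Ms$ from \eqref{eq:Ma}, using that $\dA$ is symmetric and $\nabla u_i^*\cdot\nabla p_i^*\in\Lone{S\times\Omega}$. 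So for admissible tangent directions we have
\[
0\le \int_\Omega \Bigl(\dth\,(G+h'(\ths)) - \dA:\Ms\Bigr)\,\mathrm{d}\mx .
\]

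Next, build coupled variations as in the discussion preceding the theorem. Take a path $\theta(t)=\ths+t\dth$ with $\dth\in\Linf\Omega$ chosen so that $\theta(t)\in[0,1]$ for small $t>0$. Also take $\bA(t)\in K(\theta(t))$ a measurable maximizer of $\bA:\Ms$, so that $\bA(t):\Ms=F(\theta(t),\Ms)$ a.e. The directional condition derived before the theorem, $\As:\Ms=F(\ths,\Ms)$ a.e., makes this path pass through $(\ths,\As)$ in the relevant sense.

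The main obstacle is checking that $(\dth,\dA)$ really lies in $T_\cA(\ths,\As)$. The maximizer need not equal $\As$ at $t=0$, and we need a selection differentiable in $t$ in $\Linf{}$. I would try to avoid differentiating $\bA(t)$ directly. Instead, combine the convexity of $K(\vartheta)$ with a direction $(\dth,\dA)$ built from $\bA(t)$ via difference quotients, $\dA_t=(\bA(t)-\As)/t$, which gives a feasible radial direction for each $t$. Then pass to the limit in the inequality using only $\dA_t:\Ms=(F(\theta(t),\Ms)-F(\ths,\Ms))/t\to \partial_\theta F(\ths,\Ms)\dth$. This works by the $C^1$ dependence of $F$ on $\vartheta$ (cited from \cite{A,AVmpe,V}) together with dominated convergence, since $F$ is Lipschitz in $\vartheta$ locally uniformly for bounded $\bM$. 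Here one has to handle the integrability of $\Ms$, which is only in $\Lone\Omega$, perhaps by truncating the support of $\dth$ to sets where $|\Ms|$ is bounded. This yields $0\le\int_\Omega \dth\,Q\,\mathrm{d}\mx$ for all such $\dth$.

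Finally, localize. Suppose $\{Q>0,\ \ths>0\}$ had positive measure. Then it contains a set $E$ of positive measure on which $\ths\ge\varepsilon$ and $|\Ms|\le M$. Taking $\dth=-\chi_E$ is admissible (downward variation) and gives $\int_E Q<0$, a contradiction. The case $\{Q<0,\ \ths<1\}$ is symmetric, with $\dth=\chi_E$. This gives both implications for a.e. $\mx$.
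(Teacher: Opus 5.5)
Your route is the paper's: maximize over each fibre to get $\As:\Ms=F(\ths,\Ms)$ a.e., vary $\theta$ and $\bA$ together along a path of maximizers $\bA(t)\in K(\theta(t))$, then localize. The extra bookkeeping is fine: the formula for $\varrho'$, Fubini, restricting to sets where $|\Ms|$ is bounded, and the test functions $\dth=\mp\chi_E$.

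The gap is in the step you flag yourself, and your workaround does not close it. Inequality \eqref{eq:neccon} holds only for $(\dth,\dA)\in T_\cA(\ths,\As)$. The chord direction $(\dth,\dA_t)$ with $\dA_t=(\bA(t)-\As)/t$ is not a radial direction. The inclusion $(\ths,\As)+t(\dth,\dA_t)\in\cA$ holds only at the single step $t$. Radiality would need $\As+\tfrac{\tau}{t}(\bA(t)-\As)\in K(\ths+\tau\dth)$ for all small $\tau>0$. That is a convexity property of the graph of $\vartheta\mapsto K(\vartheta)$, and it fails because $\cA$ is not convex. Convexity of each fibre only gives radial directions with $\dth=0$, which is how the paper obtains $\As:\Ms=F(\ths,\Ms)$.

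In fact these chords generally lie outside the tangent cone. Take $(h,d)\in T_\cA(\ths,\As)$. Passing to the limit in $(\As+s_kd_k):\Ms\le F(\ths+s_kh_k,\Ms)$, and using $\As:\Ms=F(\ths,\Ms)$, gives $d:\Ms\le\frac{\partial F}{\partial\theta}(\ths,\Ms)\,h$ a.e. Your chord instead has $\dA_t:\Ms=\bigl(F(\theta(t),\Ms)-F(\ths,\Ms)\bigr)/t$. This is strictly larger than $\frac{\partial F}{\partial\theta}(\ths,\Ms)\dth$ wherever $\dth\neq 0$ and $F(\cdot,\Ms)$ is strictly convex in $\vartheta$. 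That happens, for example, when $d=2$ and $\Ms$ is a positive multiple of $\Id$, since then $F$ is a multiple of the upper Hashin--Shtrikman bound. So for each fixed $t$ there is no inequality to pass to the limit in.

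Going back to optimality along the curve $(\theta(t),\bA(t))$ does not help either. The expansion of $\cR\circ\cJ$ from \cref{lem:J_is_C1} needs $\bA(t)-\As=O(t)$ in $\Linf{\Omega;\Sym}$. That is exactly what fails when $\bA(t)$ does not tend to $\As$, which is the case you were trying to cover.

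What is missing is a genuine tangent vector that attains this bound. For each admissible $\dth$ you need $\dA\in\Linf{\Omega;\Sym}$ with $(\dth,\dA)\in T_\cA(\ths,\As)$ and $\dA:\Ms\ge\frac{\partial F}{\partial\theta}(\ths,\Ms)\dth$ a.e. One way to get it is a measurable selection of maximizers $\bA(t)\in K(\theta(t))$ with $\bA(0)=\As$, whose difference quotients converge in $\Linf{\Omega;\Sym}$ as $t\downarrow 0$; convergence along one sequence is enough. The paper takes exactly this for granted when it differentiates its path $\bA(t)$ at $t=0$.

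This property does not follow from convexity of the fibres and differentiability of $F$ alone. In an abstract family with convex fibres, a face containing $\As$ can tilt at first order while $F$ stays smooth, and then no such tangent vector exists. It therefore has to come from the specific structure of $K(\vartheta)$ in \eqref{eq:defnOfKTheta}, with some care where the maximizer is not unique. Once you have such a $\dA$, \eqref{eq:neccon} gives $0\le\int_\Omega\dth\,Q\,\mathrm{d}\mx$ directly, and your localization finishes the proof.
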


\section{Numerics via the optimality criteria method}\label{sec:numerics}

In the section, we will study a particular instance of \eqref{eq:min_problem_intro} and solve it numerically. 
We will take advantage of the first-order conditions we derived in the previous section and use the optimal criteria method (see e.g. \cite[Chapter 1]{BS03} for more details) in order to compute a solution of this problem. The method in our general problem setting is given in \cref{alg:OC}.

\begin{algorithm}[H]
\caption{Optimality Criteria Method}
\label{alg:OC}
\begin{algorithmic}
\STATE \textbf{Initialization.}
Choose an admissible initial design $(\theta^0,\bA^0)$.

\STATE \textbf{For} $k=0,1,2,\dots$, repeat the following steps:

\STATE\hspace{0.7cm} \textbf{Step 1 (State equation).}
Compute the state
\[
\vu^k=(u_1^k,\ldots,u_m^k)=G(\bA^k).
\]

\STATE\hspace{0.7cm} \textbf{Step 2 (Adjoint equation).}
Compute the adjoint variable
\[
\vp^k=(p_1^k,\ldots,p_m^k),
\]
\hspace{0.8cm}solving \eqref{adjoint} with $\bA^k$, $\theta^k$, and $\vu^k$ in place of
$\ths$, $\As$, and $\vu^*$.

\STATE\hspace{0.7cm} \textbf{Step 3 (Sensitivity matrix).}
Compute
\[
\bM^k
=
{\rm Sym}\int_S \pi^k \sum_{i=1}^m \nabla u_i^k \otimes \nabla p_i^k \,  \mathrm{d}\mathbb{P}(s),
\]
\hspace{0.8cm}where $\pi^k$ is a subgradient of $\cR$ at $\cJ(\theta^k,\bA^k)$.

\STATE\hspace{0.7cm} \textbf{Step 4 (Update of $\theta$).}
For each $\mx\in\Omega$, define $\theta^{k+1}(\mx)$ as a zero of
\begin{align}
\nonumber \theta \mapsto 
\int_S \pi^k \bigl[g_\alpha(s,\mx,\vu^k)-g_\beta(s,\mx,\vu^k)\bigr]\,\mathrm{d}\mathbb{P}(s) &-\frac{\partial {F}}{\partial \theta}(\theta,\bM^k(\mx))\\
&+h'(\theta).
\label{eq:qk_alg}
\end{align}
\hspace{0.8cm}If a zero does not exist, set $\theta^{k+1}(\mx)=0$ when the function is positive,

\hspace{0.8cm}and $\theta^{k+1}(\mx)=1$ when it is negative.

\STATE\hspace{0.7cm} \textbf{Step 5 (Update of $\bA$).}
For each $\mx\in\Omega$, choose
\[
\bA^{k+1}(\mx)\in
\arg\max_{\bA} {F}\bigl(\theta^{k+1}(\mx),\bM^k(\mx)\bigr).
\]

\end{algorithmic}
\end{algorithm}
We still need to specify for example how to numerically approximate the integrals over the probability space $S$ that appear in the algorithm; this will be addressed in the next section for our specific example.

\subsection{An example involving the conditional value-at-risk}

Let us now specify the exact problem we will simulate. For the risk measure $\cR$, we adopt the conditional (average) value-at-risk CVaR. To define this, let us recall some definitions from \cite[\S 6.2.3--6.2.4]{SDR14} for the convenience of the reader. Firstly, the \emph{value-at-risk (VaR)} at level $\gamma\in (0,1)$  %
for a random variable representing losses, also known as the $\gamma$-quantile, is defined as
\[
\varg (X)=\inf \{t:\mathbb{P}(X> t)\leq \gamma\}=\inf \{t:\mathbb{P}(X\leq t)\geq 1-\gamma\}\,.
\]
Throughout this text, we assume that the random variable $X$ has no probability atoms, i.e. its distribution is absolutely continuous with respect to the Lebesgue measure.
The \emph{conditional value-at-risk (CVaR)} at level $\gamma$ is the expected loss given that the loss exceeds the value-at-risk $\varg$ at level $\gamma$:
\begin{equation}
    \cvarg(X)= \mathbb{E}[X|X\geq\varg(X)].\label{eq:defnCVAR}
\end{equation}
Intuitively, $\cvarg$ can be understood as the expected loss in the worst $\gamma$-fraction or tail of outcomes.

To keep the notation simple, we consider the case of a single state problem (i.e., $m=1$)
\begin{equation}\label{sdifs}
\left\{
\begin{array}{l}
-\div(\bA\nabla u)=f(s,\mx)\\
u\in {\rm H}^1_0(\Omega)\,,\\
\end{array}
\right.
\end{equation}
where $\Omega = [0,1]^2$ is the square. In the context of heat conduction, $u$ denotes the temperature field, $f$ represents the density of the external heat source, and the objective is to maximize the total heat energy  $\int_\Omega fu\,\mathrm{d}\mx$. %
Therefore, in our risk-averse formulation, we seek to minimize large deviations of
\[
J(\theta,\bA)=-\int_\Omega u(s,\mx)f(s,\mx)\,\mathrm{d}\mx\,.
\]
The function $\varrho$ represents the constraint on the total amount of the first phase, given by 
\[\varrho(\theta)=\lambda\int_\Omega\theta,\]
where $\lambda $ is the associated Lagrange multiplier.

Note that the subdifferential of $\cR = \cvarg$ can be characterized as follows. For a given $X\in\pL1S$, the subdifferential $\partial \cvarg(X)$ is a singleton containing the unique element $\pi\in\Linf S$ defined by \cite{SDR14}
\[
\pi(s)=\begin{cases}
    0,& X(s)<\varg(X)\\
    \frac1\gamma,& X(s)>\varg(X)
\end{cases}
\]
(bearing in mind that $\mathbb{P}(X=\varg(X))=0$).
To facilitate the evaluation of  $\varg(\cJ(\theta,\bA))$, we employ the Karhunen--Lo\`eve spectral decomposition, which separates the spatial and stochastic components:
\[
f(s,\mx)=f_0(\mx)+\sum_{j=1}^\infty \sqrt{\lambda_j}f_j(\mx)\xi_j(s)\,,
\]
where $\{\xi_i\}_{i\in\N}$ are mutually uncorrelated random variables with mean zero and unit variance.
We focus on the  truncated sum 
\[
f(s,\mx)=f_0(\mx)+\sum_{j=1}^N \sqrt{\lambda_j}f_j(\mx)\xi_j(s)\,.
\]
If $u_j$ denotes the solution of the deterministic state equation \eqref{sdif} with the right-hand side $f_j$, for $j=0, \ldots, N$, then the perturbed state function is  simply given by
\[
u(s,\mx)=u_0(\mx)+\sum_{j=1}^N \sqrt{\lambda_j}u_j(\mx)\xi_j(s)\,,
\]
which leads to  
\[
\cJ(\theta,\bA)(s)=c_0+\sum_{j=1}^Nc_j\xi_j(s)+\sum_{i=1}^N\sum_{j=i}^Nc_{ij}\xi_i(s)\xi_j(s)
\]
where
\[
\begin{aligned}
c_0&=-\int_\Omega u_0(\mx)f_0(\mx)\,\mathrm{d}\mx,\\
c_j&=-\sqrt{\lambda_j}\int_\Omega u_0(\mx)f_j(\mx)+u_j(\mx)f_0(\mx)\,\mathrm{d}\mx\,,\; j=1,\ldots, N,\\
c_{ij}&=-\sqrt{\lambda_i\lambda_j}\int_\Omega u_i(\mx)f_j(\mx)+u_j(\mx)f_i(\mx)\,\mathrm{d}\mx\,,\; i,j=1,\ldots, N,\; j\geq i\,.
\end{aligned}
\]
For the optimality criteria method, let us denote $\varg(\cJ(\theta^k,\bA^k))$ simply by $\delta_k$, and introduce the set
\[
\Delta_k=\{s\in S:\cJ(\theta^k,\bA^k)(s)\geq \delta_k\}\,,
\]
so that the subgradient $\pi^k$ is given by $\frac1\gamma\chi_{\Delta_k}$.

In the problem under consideration, we have $g_\alpha=g_\beta$, which implies that the first integral in \eqref{eq:qk_alg} vanishes. Furthermore, the problem is self-adjoint; more precisely $p^k=-u^k$, so that
\[
\bM^k=-\nabla u_0^k \otimes \nabla u_0^k-2\sum_{j=1}^N d_j^k\Sym \nabla u_0^k \otimes \nabla u_j^k
-2\sum_{i=1}^N\sum_{j=i}^Nd_{ij}^k\Sym \nabla u_i^k \otimes \nabla u_j^k
\]
where
\[
\begin{aligned}
d_j^k&=\frac1\gamma\sqrt{\lambda_j}\int_{\Delta_k} \xi_j\,\mathrm{d}\mathbb{P}(s)\,,\; j=1,\ldots, N,\\
d_{ij}^k&=\frac1\gamma\sqrt{\lambda_i\lambda_j}\int_{\Delta_k} \xi_i\xi_j\,\mathrm{d}\mathbb{P}(s)\,,\; i,j=1,\ldots, N, j\geq i\,.
\end{aligned}
\]
We employ an expansion based on an exponential covariance function. We begin with the zero-mean random field
 $g(s,\mx)=f(s,\mx)-f_0(\mx)$, 
since $\mathbb{E}[f(\cdot,\mx)]=f_0(\mx)$. %
The covariance function is given by
\[
C(\mx,\mx')=\mathbb{E}[g(\cdot,\mx)g(\cdot,\mx')],
\]
and in this example we specify $C$ to be exponential covariance function:
\[
C(\mx,\mx')=\sigma^2 \exp \left({\displaystyle-\sum_{k=1}^d\frac{|x_k-x_k'|}{\eta_k}}\right)\,,
\]
where $\sigma^2$ and $\eta_k$ are the variance and the correlation lengths.
For this choice of covariance, the eigenpairs $(\lambda_j,f_j)$ of the associated integral operator
  $\cC\colon\pL2\Omega\to\pL2\Omega$ defined by
\[
\cC\psi(\mx)=\int_\Omega C(\mx,\mx')\psi(\mx')\,\mathrm{d}\mx'
\]
can be computed explicitly \cite{ZL04}. 
Moreover, the random coefficients $\xi_j$ in the resulting expansion are standard normally distributed.

The numerical example considers the two-dimensional setting $\Omega=[0,1]^2$ (as mentioned before), with a constant mean value $f_0=1$ on $\Omega$.
We choose $N=5$, $\sigma=2$, level $\gamma=0.9$, and examine several values of the correlation length $\eta:=\eta_1=\eta_2$.
The algorithm is implemented in {\rm \texttt{Freefem++}}, with P1 elements.
At each iteration of the optimality criteria method,  the $\gamma$-quantile $\varg(\cJ(\theta^k,\bA^k))$ and the integrals appearing in the definition of $\bM^k$ are approximated using a Monte Carlo procedure with a sample size of 10,000.

As is typical of the optimality criteria method, only a few iterations are required to obtain a good approximation of the solution, and the result is largely insensitive to the choice of the initial design.  \cref{fig:res} presents the designs obtained after 10 iterations for three values of the parameter $\eta$, alongside the design corresponding to the unperturbed right-hand side $f_0=1$ for comparison. 

\begin{figure}[H]
\centering
\includegraphics[scale=0.45]{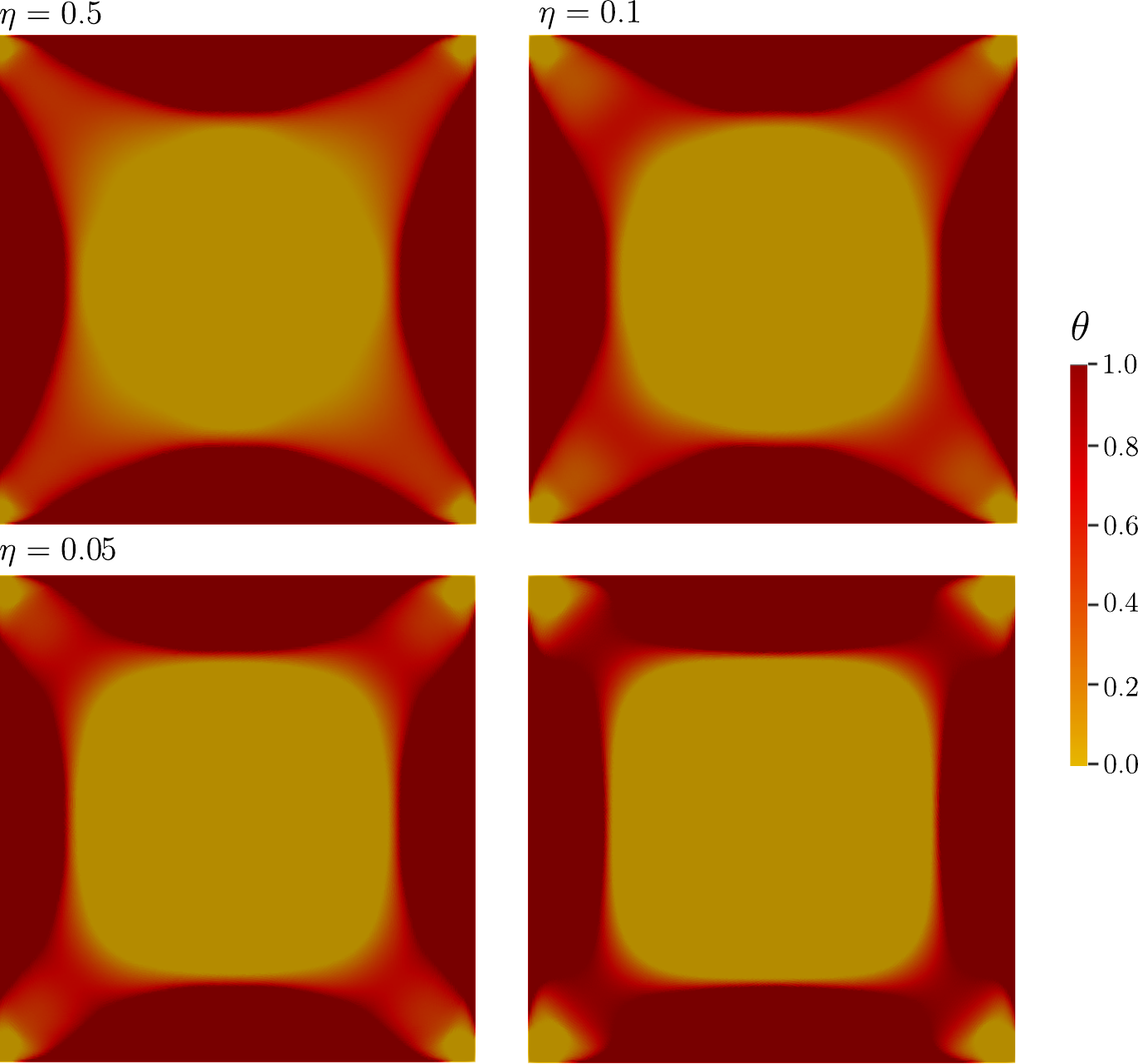}
\caption{Numerical solutions for 
$\eta=0.5$, $0.1$, and $0.05$ (displayed left to right, top to bottom) for $\cR = \cvarg$, 
along with the solution corresponding to the unperturbed right-hand side.}\label{fig:res}
\end{figure}
We see that as $\eta$ shrinks (i.e. as the influence of randomness reduces), the optimal shapes resemble more and more the deterministic shape seen in the bottom right corner of \cref{fig:res}.

 For purposes of comparison, we also consider the risk-neutral case in which the risk measure coincides with the expectation $\cR = \mathbb{E}$. The corresponding designs are shown in \cref{fig:alpha0}, again for various values of~$\eta$ and the unperturbed right-hand side in the rightmost panel. It can be observed that the obtained solutions are close to those of the corresponding non-perturbed problem. This behavior can be explained by the fact that the random fluctuations have zero mean, and thus tend to cancel out in expectation. 
 
 In contrast, for the risk-averse formulation, extreme realizations dominate the objective, and the optimizer explicitly accounts for adverse events. As a result, the solution becomes more conservative, prioritizing robustness over average performance. This approach reduces the likelihood of poor outcomes under unfavorable realizations, even at the expense of slightly suboptimal performance under typical conditions.

\begin{figure}[H]
    \centering
    \includegraphics[width=0.97\linewidth]{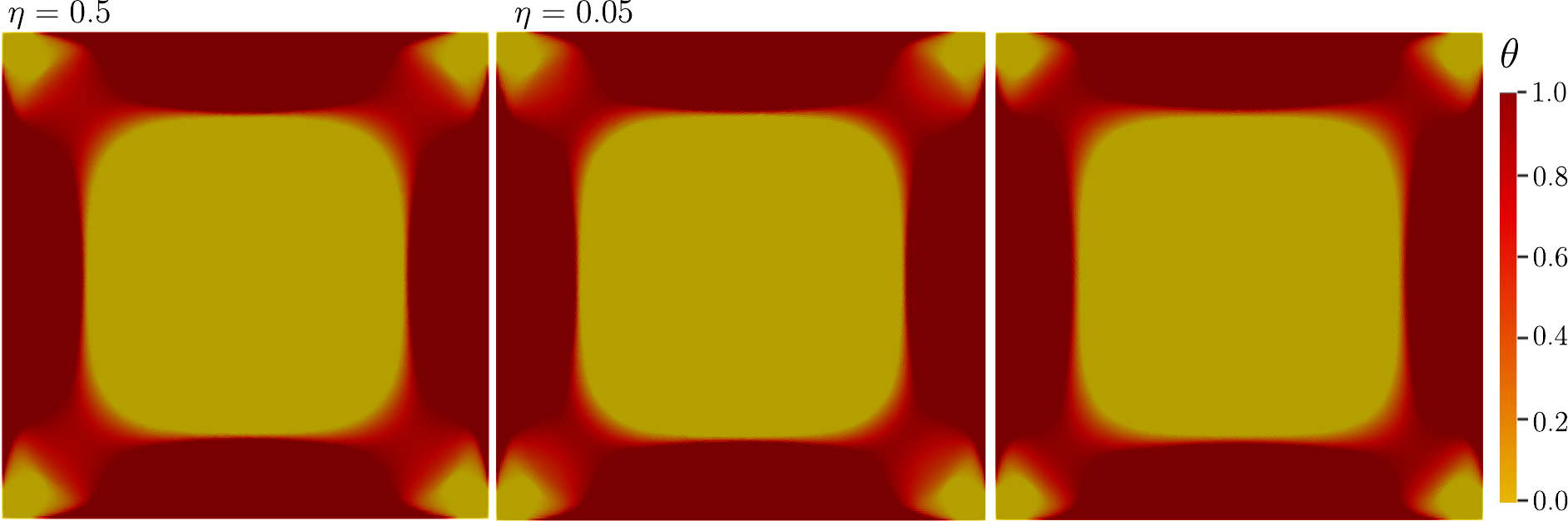}
    \caption{(Risk-neutral case) Numerical solutions for 
$\eta=0.5$, $0.05$ (displayed left to right) for $\cR = \mathbb{E}$, 
along with the solution corresponding to the unperturbed right-hand side.}
    \label{fig:alpha0}
\end{figure}

\section{Conclusion}\label{sec:conclusion}
In this paper, we studied a risk-averse stochastic optimization problem involving optimal designs. We proved existence and derived stationarity conditions, and used those to set up a numerical scheme and numerically solved an example. Our work can be seen as a first step in taking into account risk measures to deal with uncertainties in optimal design problems in conductivity. Further work could involve a more sophisticated analysis specialized to certain problem classes 
in applications and the resulting numerics and/or the development of tailored solution algorithms.

\section*{Acknowledgements}
The research of P.~Kunštek and M.~Vrdoljak  has been supported in part by Croatian Science Foundation under the project IP-2022-10-7261. 
The calculations were performed at the Laboratory for Advanced Computing (Faculty of Science, University of Zagreb). A. Alphonse thanks René Henrion for very helpful discussions.

\nocite{}
\bibliographystyle{abbrv}
\bibliography{optimal_design}

\end{document}